\documentclass[12pt]{amsart}
\usepackage{amssymb}
\usepackage{url}

\newcommand{\Z}{{\mathbb{Z}}}
\newcommand{\R}{{\mathbb{R}}}
\newcommand{\C}{{\mathbb{C}}}

\newcommand{\slm}{{\mathfrak{sl}}}
\newcommand{\fg}{{\mathfrak{g}}}
\newcommand{\fh}{{\mathfrak{h}}}
\newcommand{\be}{{\mathbf{e}}}
\newcommand{\cL}{{\mathcal{L}}}
\newcommand{\vQ}{{\vec{\mathcal{I}}}}
\newcommand{\vQp}{{\vec{\mathcal{I}'}}}

\newcommand{\hgt}{{\operatorname{ht}}}
\renewcommand{\leq}{\leqslant}
\renewcommand{\geq}{\geqslant}

\newtheorem{thm}{Theorem}[section]
\newtheorem{lem}[thm]{Lemma}
\newtheorem{prop}[thm]{Proposition}
\newtheorem{cor}[thm]{Corollary}

\theoremstyle{definition}
\newtheorem{defn}[thm]{Definition}
\newtheorem{exmp}[thm]{Example}
\newtheorem{abs}[thm]{}

\theoremstyle{remark}
\newtheorem{rem}[thm]{Remark}

\numberwithin{equation}{section}

\begin{document}
\title{On Lusztig's canonical bases of simple Lie algebras}
\author{Meinolf Geck}
\address{Lehrstuhl f\"ur Algebra\\Universit\"at Stuttgart\\
Pfaffenwaldring 57\\D--70569 Stuttgart\\ Germany}
\curraddr{}
\email{meinolf.geck@mathematik.uni-stuttgart.de}
\thanks{This work is a contribution to the SFB-TRR 195 ``Symbolic Tools in
Mathematics and their Application'' of the German Research Foundation 
(DFG)}


\subjclass[2000]{Primary 20G40; Secondary 17B45}

\date{}

\begin{abstract} Let $\fg$ be a simple Lie algebra over~$\C$ with root 
system~$\Phi$. In the simply laced case, Frenkel and Kac found a 
particularly simple construction of~$\fg$, together with a Chevalley 
basis and explicitly given structure constants, in terms of a certain 
multiplicative $2$-cocycle $\varepsilon\colon \Z \Phi\times \Z\Phi 
\rightarrow\{\pm 1\}$. We show that Lusztig's canonical basis of~$\fg$ 
can also be obtained in this way, for a suitable choice of~$\varepsilon$. 
We also address the problem of explicitly describing the structure constants 
when $\Phi$ is not simply laced.
\end{abstract}

\keywords{Root systems, Lie algebras, canonical bases, structure constants}
\maketitle

\section{Introduction} \label{sec0}

Let $\fg$ be a finite-dimensional simple Lie algebra over $\C$. In this
paper we address some questions concerning the various possible choices 
of Chevalley bases for~$\fg$, and the determination of the corresponding 
structure constants. Beginning with Chevalley \cite{Ch}, this subject has 
a long history; here it suffices to cite the work of Tits \cite{Ti}, 
Frenkel--Kac \cite{FK}, Ringel \cite{Ri} and, finally, Lusztig \cite{L5}. 
(Further references will be given in due course in the main text.)

To explain what we are trying to do, let us first assume that the root
system~$\Phi$ of~$\fg$ is simply laced, of type $A_r$ ($r\geq 1$), $D_r$ 
($r\geq 3$) or $E_r$ ($r=6,7,8$). Then Frenkel and Kac \cite{FK} found a 
particularly simple way~---~compared to the general treatment of Tits
\cite{Ti}~---~of constructing~$\fg$ together with a Chevalley basis. This 
leads to an explicit description of the corresponding structure constants 
in terms of a certain multiplicative cocycle $\varepsilon\colon\Z\Phi\times
\Z\Phi \rightarrow \{\pm 1\}$, which can be characterised by a few formal 
conditions motivated by the consideration of central extensions of the
lattice $\Z \Phi$ by $\Z/2\Z$. There are various choices in this 
construction; for example, each orientation of the Dynkin diagram of 
$\Phi$ gives rise to a different~$\varepsilon$. On the other hand, 
there is Lusztig's general theory \cite{L10} of canonical bases for
quantised enveloping algebras. Viewing $\fg$ as a $\fg$-module via the 
adjoint representation, that theory gives rise to a canonical basis of 
$\fg$ itself. An elementary description of that basis, without reference 
to the theory of quantised enveloping algebras, can be found in the 
author's lecture notes \cite[\S 2.7]{mynotes}. 

The first aim of this paper is to establish a relation between the 
Frenkel--Kac construction and Lusztig's canonical basis. Once this is 
achieved, we obtain~---~as an immediate by-product~---~explicit formulae 
for the structure constants with respect to Lusztig's basis of~$\fg$. In 
fact, such formulae were previously found by a different method in the 
Master's thesis of Lang (see \cite{GeLa}, \cite{Lang}), without reference 
to the Frenkel--Kac construction. So the approach in this paper 
provides~---~at the same time~---~a new proof for these formulae, and also 
a new proof for the existence of Lusztig's canonical basis for~$\fg$. 

The second aim of this paper is to consider the general case where the
root system of $\fg$ may not be simply laced. Then $\fg$ can be obtained
by a ``folding'' procedure from suitable simply laced cases. Inspired by 
the work of Ringel \cite{Ri}, and unpublished notes by Rylands \cite{Ry}, 
we will determine rather simple formulae for the structure constants with 
respect to Lusztig's canonical basis in this case as well. In a certain
sense (made more precise in Remark~\ref{summa}), these formulae only 
depend on the type of $\fg$, but not on the rank or a specific labelling
of the Dynkin diagram. The exact formulations were found by extensive 
experimentation with the author's programs in \cite{mygreen}, 
\cite{chevlie}, but the final proofs will be computer-free.

This paper is organised as follows. In Section~\ref{seccan} we formulate an 
axiomatic setting for dealing with Lusztig's canonical basis; this may be of 
independent interest and it will also be useful in the subsequent discussions. 
In Section~\ref{secflm} we begin by briefly explaining the Frenkel--Kac 
construction, where we follow the slightly more general exposition of
Frenkel--Lepowsky--Meurman \cite[Chap.~6]{FLM}. The cocycles~$\varepsilon$ 
that are most common in the literature, e.g., in De Graaf 
\cite[\S 5.13]{graaf}, Kac \cite[\S 7.8]{kac} or Springer \cite[\S 10.2]{Sp},
are typically normalised such that 
\begin{center}
$\varepsilon(\alpha,\alpha)=-1 \qquad$ for all $\alpha\in \Phi$.
\end{center}
One easily sees that no such $\varepsilon$ will give 
rise to Lusztig's canonical basis. However, based on an idea from Lang's 
Master's thesis (see \cite{GeLa}, \cite{Lang}), there are natural choices 
of cocycles $\varepsilon_0 \colon\Z\Phi\times\Z\Phi\rightarrow\{\pm 1\}$
which both satisfy the requirements for the Frenkel--Kac construction, 
and which are normalised such that 
\begin{center}
$\varepsilon_0(\alpha,\alpha)=-(-1)^{\text{height}(\alpha)} \qquad$
for all $\alpha\in \Phi$.
\end{center}
In Proposition~\ref{propGL} we shall see that cocycles $\varepsilon_0$ as 
above indeed give rise to Lusztig's canonical basis; in fact, there are 
precisely two canonical choices for~$\varepsilon_0$, which correspond to 
the two possible orientations of the Dynkin diagram in which each vertex 
is either a sink or a source. Finally, Section~\ref{secnon} deals with Lie 
algebras with a non-simpy laced root system, where the combinatorial details
needed for $\fg$ of type $B_r$ and~$C_r$ are contained in a 
separate Section~\ref{secBC}.

\section{Special Chevalley systems} \label{seccan}

Let $E$ be a finite-dimensional $\R$-vector space and $\langle\;,\;\rangle
\colon E \times E \rightarrow \R$ be a positive-definite scalar product.
Let $\Phi\subseteq E$ be an (irreducible) crystallographic root system 
such that $E$ is spanned by $\Phi$. We fix a system $\{\alpha_i\mid i 
\in I\}$ of simple roots, where $I$ is an index set such that $\dim E=|I|$. 
Thus, every $\alpha\in \Phi$ can be written uniquely as a $\Z$-linear 
combination of $\{\alpha_i \mid i \in I\}$, where either all coefficients 
are $\geq 0$, or all coefficients are $\leq 0$. Accordingly, we have a 
partition $\Phi=\Phi^+ \sqcup \Phi^-$ into positive and negative roots. 
Let $A=(a_{ij})_{i,j\in I}$ be the corresponding Cartan matrix; we have 
\[ a_{ij}=2\frac{\langle \alpha_i,\alpha_j\rangle}{\langle \alpha_i,\alpha_i
\rangle} \in \Z\qquad \mbox{for all $i,j\in \Z$}.\]
See Humphreys \cite[Part~III]{H} for further background. For any $\alpha\in 
\Phi$ let us set $\alpha^\vee:=2\alpha/\langle \alpha, \alpha\rangle\in E$. 
Then $\langle \alpha^\vee,\beta\rangle=q_{\alpha,\beta}-p_{\alpha,\beta}$
for $\alpha,\beta\in \Phi$ such that $\alpha\neq \pm \beta$, where 
\begin{align*}
p_{\alpha,\beta}&:=\max\{m \geq 0 \mid \beta+m\alpha\in \Phi\},\\
q_{\alpha,\beta}&:=\max\{m \geq 0 \mid \beta-m\alpha\in \Phi\};
\end{align*}
see, e.g., Humphreys \cite[\S 8.5]{H} or \cite[Lemma~2.6.2]{mynotes}. The 
integers $p_{\alpha,\beta}$ and $q_{\alpha,\beta}$ play a crucial role in 
the description of Chevalley bases.

Let $\fg$ be a simple Lie algebra over $\C$ with root system $\Phi$. We 
recall some facts from the general structure theory of $\fg$; see again 
\cite{H} for further background. Let $\fh\subseteq \fg$ be a Cartan 
subalgebra. We have a corresponding decomposition $\fg=\fh\oplus 
\bigoplus_{\alpha\in \Phi}\fg_\alpha$ where each $\fg_\alpha$ is a 
$1$-dimensional subspace. For each $\alpha\in \Phi$, there is a 
well-defined co-root $h_\alpha\in \fh$ which is uniquely characterised 
by the conditions that $0\neq h_\alpha\in [\fg_\alpha,\fg_{-\alpha}]$ and 
that $[h_\alpha,x]=2x$ for all $x\in \fg_\alpha$. For each $\alpha\in\Phi$ 
let us choose a non-zero ``root element'' $0\neq e_\alpha \in \fg$. Then 
\[ [e_\alpha,e_{-\alpha}]=\zeta_\alpha h_\alpha \qquad \mbox{where
$0\neq \zeta_\alpha\in \C$}.\]
(For the time being, we do not impose any conditions on $\zeta_\alpha$;
just note that $\zeta_{\alpha}=\zeta_{-\alpha}$ for all $\alpha\in \Phi$,
since $h_{-\alpha}=-h_\alpha$.)

For $i\in I$ we write $h_i:=h_{\alpha_i}$. Then $\cL:=\{e_\alpha \mid
\alpha\in \Phi\}$ together with $\{h_i\mid i \in I\}$ forms a basis 
of~$\fg$. For $\alpha,\beta\in \Phi$ such that $\alpha+\beta\in 
\Phi$ we write as usual
\[ [e_\alpha,e_\beta]=N_{\alpha,\beta}e_{\alpha+\beta} \qquad
\mbox{where} \qquad N_{\alpha,\beta} \in \C.\] 
Chevalley \cite{Ch} proved the fundamental result that the collection
$\cL=\{e_\alpha \mid\alpha\in \Phi\}$ can be chosen such that 
\begin{equation*}
N_{\alpha,\beta}=\varepsilon(\alpha,\beta)(q_{\alpha,\beta}+1) \qquad 
\mbox{if $\alpha,\beta,\alpha+\beta\in \Phi$}, \tag{$\clubsuit$}
\end{equation*}
where $\varepsilon(\alpha,\beta)\in \{\pm 1\}$. In this case, we say
that $\cL$ is a \textit{Chevalley system} for~$\fg$ and call 
$\varepsilon$ the ``sign function associated with~$\cL$''. 

Note that, since we are not imposing any conditions on the 
scalars~$\zeta_\alpha$, our definition of a ``Chevalley system'' is 
somewhat more general than that in Bourbaki \cite[Ch.~VIII, \S 2, 
no.~4]{bour2}. 

Clearly, ($\clubsuit$) alone does not determine $\cL$ uniquely; for 
example, we may replace each $e_\alpha$ by $\pm e_\alpha$ (where the
sign may depend on~$\alpha$) and the above relations will still hold, 
with a possibly different sign function $\varepsilon(\alpha,\beta)$. But
of course, the anti-symmetry of $[\;,\;]$ and the Jacobi identity impose 
certain conditions. Here is a very useful example for such a condition.

\begin{lem} \label{cass1} Let $\cL$ be a Chevalley system, as above. Let 
$\gamma_1,\gamma_2,\gamma_2\in \Phi$ be such that $\gamma_1+\gamma_2+
\gamma_3=0$. Then $\varepsilon(\gamma_1,\gamma_2)=-\varepsilon(\gamma_2,
\gamma_1)$ and 
\[ \zeta_{\gamma_3}\varepsilon(\gamma_1,\gamma_2)=\zeta_{\gamma_1}
\varepsilon(\gamma_2,\gamma_3)=\zeta_{\gamma_2}\varepsilon(\gamma_3,
\gamma_1).\]
\end{lem}

\begin{proof} The first formula is clear by the anti-symmetry of
$[\;,\;]$. Using the Jacobi identity and arguing as in Carter 
\cite[p.~52]{C1}, we find that 
\[ \frac{\zeta_{\gamma_3}}{\langle \gamma_3,\gamma_3\rangle} N_{\gamma_1,
\gamma_2}=\frac{\zeta_{\gamma_1}}{\langle \gamma_1,\gamma_1\rangle}
N_{\gamma_2,\gamma_3}=\frac{\zeta_{\gamma_2}}{\langle \gamma_2,\gamma_2
\rangle} N_{\gamma_3,\gamma_1}.\]
Now, by Casselman \cite[Lemma~2.3]{Cass0}, we also have 
\[ \frac{q_{\gamma_1,\gamma_2}+1}{\langle \gamma_3,\gamma_3\rangle}
=\frac{q_{\gamma_2,\gamma_3}+1}{\langle \gamma_1,\gamma_1\rangle}=
\frac{q_{\gamma_3,\gamma_1}+1}{\langle \gamma_2,\gamma_2 \rangle}.\]
Then the second desired formula follows using ($\clubsuit$).
\end{proof}

\begin{rem} \label{extras} One way of fixing a Chevalley system is 
described by Carter \cite[p.~58--59]{C1}. This relies on the choice of a 
positive system $E^+\subseteq E$ as in \cite[\S 2.1]{C1}. Correspondingly, 
one can define the notions of ``special'' and ``extraspecial'' pairs of 
roots. Then there is a unique Chevalley system $\cL=\{e_\alpha\mid\alpha 
\in \Phi\}$ such that $[e_\alpha,e_{-\alpha}]=h_\alpha$ for all $\alpha\in
\Phi$, and $\varepsilon(\alpha,\beta)=1$ whenever the pair $(\alpha,
\beta)$ is extraspecial. Such Chevalley systems have been used quite 
frequently in the literature, especially in a computational context; see, 
e.g., Cohen--Murray--Taylor \cite{Coet}, Gilkey--Seitz \cite{GiSe} or 
Vavilov \cite{vav2}. 
\end{rem}

Following Lusztig \cite{L1,L2,L5} we will now single out a set of 
conditions on $\cL$ which severely restrict the possibilities for the 
corresponding signs functions $\varepsilon(\alpha,\beta)$ and which, as far
as possible, do not depend on any choices. The conditions in the following
definition look slightly different from those in \cite{L1,L2,L5}; the exact 
relation between the two will be explained in Theorem~\ref{thmcan} below.

\begin{defn} \label{defcan} Let $\cL=\{e_\alpha\mid \alpha\in \Phi\}$ be 
a collection of root elements such that~($\clubsuit$) holds. We say that
$\cL$ is a ``special Chevalley system'' (or a ``Lusztig system'') if, 
for each $i\in I$, we have $[e_{\alpha_i},e_{-\alpha_i}]=h_i$ and 
there is a sign $c_i\in \{\pm 1\}$ such that 
\begin{alignat*}{2}
[e_{\alpha_i},e_\alpha]&=c_i(q_{\alpha_i,\alpha}+1)e_{\alpha+\alpha_i} &&
\quad \mbox{if $\alpha\in \Phi^+$ and $\alpha+\alpha_i\in \Phi$},\tag{a}\\
[e_{-\alpha_i},e_\alpha]&=c_i(p_{\alpha_i,\alpha}+1)e_{\alpha-\alpha_i} 
&&\quad \mbox{if $\alpha\in \Phi^-$ and $\alpha-\alpha_i\in \Phi$}.\tag{b}
\end{alignat*}
(Note that $p_{\alpha_i,\alpha}=q_{-\alpha_i,\alpha}$ so (b) is
consistent with $(\clubsuit)$.) 
\end{defn}

\begin{rem} \label{remcan1} Let $\cL$ be a special Chevalley system as
above, with corresponding signs $\{c_i\mid i \in I\}$. 

(a) Let $e_\alpha^-:=-e_\alpha$ for all $\alpha\in \Phi$. Then 
$\cL^-:=\{e_\alpha^-\mid \alpha\in \Phi\}$ also is a special Chevalley 
system, with corresponding signs $\{-c_i\mid i \in I\}$. If $\alpha,
\beta\in \Phi$ are such that $\alpha+\beta \in \Phi$, then we have 
\[ [e_\alpha^-,e_\beta^-]=-\varepsilon(\alpha,\beta)(q_{\alpha,\beta}+1)
e_{\alpha+\beta}^-,\]
where $\varepsilon(\alpha,\beta)$ is the sign function of $\cL$.

(b) The signs $\{c_i\}$ satisfy the following condition.
\[ i,j\in I, \quad i \neq j \quad\mbox{and} \quad a_{ij} \neq 0
\quad \Rightarrow \quad c_i=-c_j.\]
Indeed, let $i \neq j$ such that $a_{ij}\neq 0$. Since $\alpha_i-
\alpha_j\not\in \Phi$, we have $q_{\alpha_i,\alpha_j}=q_{\alpha_j,
\alpha_i}=0$. Since $a_{ij}=q_{\alpha_i,\alpha_j}-p_{\alpha_i,\alpha_j}$, 
we conclude that $p_{\alpha_i,\alpha_j}>0$ and so $\alpha_i+\alpha_j 
\in\Phi$. Using twice the identity in Definition~\ref{defcan}(a), we obtain 
\begin{align*}
[e_{\alpha_i},e_{\alpha_j}]&=c_i(q_{\alpha_i,\alpha_j}+1)
e_{\alpha_i+\alpha_j}=c_ie_{\alpha_i+\alpha_j} ,\\
[e_{\alpha_j},e_{\alpha_i}]&=c_j(q_{\alpha_j,\alpha_i}+1)
e_{\alpha_j+\alpha_i}=c_je_{\alpha_i+\alpha_j}.
\end{align*}
Since $[\;,\;]$ is anti-symmetric, this forces $c_i=-c_j$, as claimed.
(Collections of signs satisfying the above condition already appeared 
in the work of Rietsch \cite[4.1]{KR}.) 
\end{rem}

\begin{table}[htbp] \caption{Two orientations of the graph of type $E_6$} 
\label{Morient} \begin{center} \makeatletter
\begin{picture}(350,45)
\put( 13, 35){$E_6$:}
\put( 60, 40){\circle*{5}}
\put( 58, 45){$\scriptstyle{1}$}
\put( 60, 40){\vector(1,0){23}}
\put( 85, 40){\circle*{5}}
\put( 83, 45){$\scriptstyle{3}$}
\put( 85, 40){\vector(1,0){23}}
\put(110, 40){\circle*{5}}
\put(108, 45){$\scriptstyle{4}$}
\put(110, 40){\vector(0,-1){18}}
\put(110, 20){\circle*{5}}
\put(115, 18){$\scriptstyle{2}$}
\put(110, 40){\vector(1,0){23}}
\put(135, 40){\circle*{5}}
\put(133, 45){$\scriptstyle{5}$}
\put(135, 40){\vector(1,0){23}}
\put(160, 40){\circle*{5}}
\put(158, 45){$\scriptstyle{6}$}

\put(220, 40){\circle*{5}}
\put(218, 45){$\scriptstyle{1}$}
\put(245, 40){\vector(-1,0){23}}
\put(245, 40){\circle*{5}}
\put(243, 45){$\scriptstyle{3}$}
\put(245, 40){\vector(1,0){23}}
\put(270, 40){\circle*{5}}
\put(268, 45){$\scriptstyle{4}$}
\put(270, 20){\vector(0,1){18}}
\put(270, 20){\circle*{5}}
\put(275, 18){$\scriptstyle{2}$}
\put(295, 40){\vector(-1,0){23}}
\put(295, 40){\circle*{5}}
\put(293, 45){$\scriptstyle{5}$}
\put(295, 40){\vector(1,0){23}}
\put(320, 40){\circle*{5}}
\put(318, 45){$\scriptstyle{6}$}
\put( 03,  0){\footnotesize (The numbers attached to the vertices define 
a standard labelling of the graph.)}
\end{picture}
\end{center}
\end{table}

\begin{rem} \label{graphA} Consider the ``incidence graph'' of $A$. It has
vertices labelled by the index set~$I$; for $i \neq j$ in $I$, there is a
simple edge between $i$ and $j$ if $a_{ij}\neq 0$. (Note that $a_{ij}\neq 0
\Leftrightarrow a_{ji}\neq 0$.) Putting arrows on the edges of this graph,
we obtain an ``orientation'' of the graph; see Table~\ref{Morient} for 
two examples. Now a collection of signs $\{c_i\}$ satisfing the condition 
in Remark~\ref{remcan1}(b) precisely characterises an orientation where 
each vertex is either a sink or a source. (For example, the second graph
in Table~\ref{Morient} has this property.) Given $i \neq j$ in $I$ that 
are joined by an edge (i.e., $a_{ij}\neq 0$), we have an arrow on that 
edge pointing from~$i$ to~$j$ if and only if $c_i=-1$ and $c_j=1$. (For 
example, the orientation of the second graph in Table~\ref{Morient} is 
characterised by the signs $c_2=c_3=c_5=-1$ and $c_1=c_4=c_6=1$.) Since 
the graph of $A$ is a connected tree, it is clear that there only two 
orientations where each vertex is either a sink or a source. Hence, there 
are only two collections of signs $\{c_i\}$ as above, obtained one from 
another by replacing each $c_i$ by $-c_i$.
\end{rem}

\begin{prop} \label{propcan1} Let $\cL$ be a special Chevalley system as
above, with corresponding signs $c_i$ ($i\in I$) and $\varepsilon(\alpha,
\beta)$ (for $\alpha,\beta\in \Phi$ such that $\alpha+\beta\in \Phi$). 
Then the following hold.
\begin{itemize}
\item[(a)] The signs $\{\varepsilon(\alpha,\beta)\}$ only depend on 
the signs $\{c_i\}$.
\item[(b)] We have $\varepsilon(\alpha,\beta)=\varepsilon(-\alpha,-\beta)$
for $\alpha,\beta,\alpha+\beta\in \Phi$. 
\end{itemize}
\end{prop}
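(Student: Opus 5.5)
Part (a) is a rigidity statement: given the signs $\{c_i\}$, the elements $e_\alpha$ are determined up to a controlled ambiguity, and the structure constants are forced. I will show by induction on the height that $e_\alpha$ is uniquely determined by $\{e_{\pm\alpha_i}\}$ and the $c_i$. In fact I will show more: the structure constants of $\fg$ with respect to $\cL$ are uniquely determined.

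First I normalise the simple part. The elements $e_{\alpha_i},e_{-\alpha_i}$ satisfy $[e_{\alpha_i},e_{-\alpha_i}]=h_i$. They are therefore Chevalley generators, unique up to the torus action $e_{\alpha_i}\mapsto t_ie_{\alpha_i}$, $e_{-\alpha_i}\mapsto t_i^{-1}e_{-\alpha_i}$. By Serre's theorem, the bracket relations among them and the $h_i$ are independent of choices.

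Next I handle positive roots. Let $\alpha\in\Phi^+$ with $\hgt(\alpha)\geq 2$, and pick $i$ with $\beta:=\alpha-\alpha_i\in\Phi^+$. Definition~\ref{defcan}(a) gives
\[ e_\alpha=c_i(q_{\alpha_i,\beta}+1)^{-1}[e_{\alpha_i},e_\beta]. \]
By induction, $e_\alpha$ is then an explicit iterated bracket of the $e_{\alpha_j}$, with a coefficient depending only on the $c_j$ and the root data. This is well defined: for any other choice of $i$ the formula must give the same element, since $\cL$ is assumed to exist.

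The same argument works for negative roots via (b). Thus every $e_\alpha$ is a fixed noncommutative polynomial in the $e_{\pm\alpha_j}$, with coefficients depending only on $\{c_j\}$ and $\Phi$.

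It follows that each $N_{\alpha,\beta}$, and hence each $\varepsilon(\alpha,\beta)$, is determined. Write $[e_\alpha,e_\beta]$ using these polynomial expressions and reduce via the Serre presentation, which is fixed. The outcome does not depend on $\cL$. The torus rescaling $t_i$ does not affect this, since all relations are homogeneous. One can also pass by an automorphism of $\fg$ to a fixed set of Chevalley generators, which changes nothing in the signs.

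For (b) I plan to use the Chevalley involution $\omega$, defined by $\omega(e_{\alpha_i})=-e_{-\alpha_i}$, $\omega(e_{-\alpha_i})=-e_{\alpha_i}$ and $\omega(h)=-h$ on $\fh$. Set $e'_\alpha:=-\omega(e_{-\alpha})$, and let $\cL':=\{e'_\alpha\}$.

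The first check is the simple part of $\cL'$. We have $e'_{\alpha_i}=e_{\alpha_i}$, and
\[ [e'_{\alpha_i},e'_{-\alpha_i}]=[\omega(e_{-\alpha_i}),\omega(e_{\alpha_i})]=\omega(-h_i)=h_i. \]

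The second check is that $\cL'$ satisfies (a) with the same $c_i$. For $\alpha\in\Phi^+$,
\[ [e'_{\alpha_i},e'_\alpha]=\omega([e_{-\alpha_i},e_{-\alpha}])=\omega\bigl(c_i(p_{\alpha_i,-\alpha}+1)e_{-\alpha-\alpha_i}\bigr)=-c_i(q_{\alpha_i,\alpha}+1)e'_{\alpha+\alpha_i}, \]
using $p_{\alpha_i,-\alpha}=q_{\alpha_i,\alpha}$.

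This sign discrepancy is where I expect the main obstacle, so I must fix the convention carefully. I will try $e'_\alpha:=\omega(e_{-\alpha})$ instead. Then $[e'_{\alpha_i},e'_{-\alpha_i}]=\omega([e_{-\alpha_i},e_{\alpha_i}])=h_i$ still holds. Moreover, the computation above gives $+c_i(q+1)e'_{\alpha+\alpha_i}$ with no stray minus sign. Symmetrically, (b) holds for $\cL'$ with the same $c_i$.

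By (a), $\cL'$ therefore has the same sign function as $\cL$. Now, for $\alpha,\beta,\alpha+\beta\in\Phi$,
\[ [e'_\alpha,e'_\beta]=\omega([e_{-\alpha},e_{-\beta}])=\varepsilon(-\alpha,-\beta)(q_{-\alpha,-\beta}+1)e'_{\alpha+\beta}. \]
Since $q_{-\alpha,-\beta}=q_{\alpha,\beta}$, comparing with the sign function of $\cL'$, which equals that of $\cL$, yields $\varepsilon(\alpha,\beta)=\varepsilon(-\alpha,-\beta)$.
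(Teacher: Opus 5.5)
Your proof is correct and follows essentially the paper's route. You write each $e_{\pm\alpha}$ as a fixed iterated bracket of simple root vectors, with coefficients depending only on the $c_i$. Then (a) follows from the Isomorphism Theorem (Serre), and (b) from an involution exchanging $\fg_{\alpha_i}$ and $\fg_{-\alpha_i}$. The only difference is in (b): you transport $\cL$ by the Chevalley involution to $\cL'=\{\omega(e_{-\alpha})\}$, check that it is special with the same signs, and invoke (a). The paper instead normalises $\omega(c_ie_{\alpha_i})=c_ie_{-\alpha_i}$ and uses $m_\alpha^+=m_\alpha^-$ to get $\omega(e_\alpha)=e_{-\alpha}$ directly.
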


\begin{proof} In the following, it will be convenient to set $e_i:=c_i
e_{\alpha_i}$ and $f_i:=c_ie_{-\alpha_i}$ for $i \in I$; note that we 
still have $[e_i,f_i]=h_i$. Let $\alpha \in \Phi^+$. Then it is well-known 
that there is a sequence of indices $i_1,\ldots,i_r\in I$ ($r\geq 1$) such 
that $\beta_j:=\alpha_{i_1}+\ldots +\alpha_{i_j}\in \Phi$ for $1\leq j
\leq r$, where $\beta_r=\alpha$; see, e.g., \cite[\S 10.2]{H}. Let us fix 
such a sequence for each $\alpha \in \Phi^+$. Then 
Definition~\ref{defcan}(a) shows that 
\[e_\alpha=(m_\alpha^+)^{-1}[e_{i_r},\ldots [e_{i_3},[e_{i_2},e_{i_1}]]
\ldots ]\]
where $0\neq m_\alpha^+\in \Z$ equals $c_{i_1}$ times the product of all 
$c_{i_j} (q_{\alpha_{i_j},\beta_{j-1}}+1)$ for $2\leq j \leq r$.  
Similarly, Definition~\ref{defcan}(b) shows that 
\[e_{-\alpha}=(m_\alpha^-)^{-1}[f_{i_r},\ldots [f_{i_3},[f_{i_2},f_{i_1}]]
\ldots ]\]
where $0\neq m_\alpha^-\in \Z$ equals $c_{i_1}$ times the product of all 
$c_{i_j}(p_{\alpha_{i_j},-\beta_{j-1}}+1)$ for $2\leq j \leq r$. 
Note that $p_{\alpha_{i_j},-\beta_{j-1}}=q_{\alpha_{i_j},\beta_{j-1}}$
for $2\leq j \leq r$, so we actually have $m_\alpha^-=m_\alpha^+$.

(a) Let $\cL'=\{e_\alpha' \mid \alpha\in \Phi\}$ also be a special 
Chevalley system, with respect to the same signs $\{c_i\mid i\in I\}$. 
Write 
\[[e_\alpha',e_\beta']=\varepsilon'(\alpha,\beta)(q_{\alpha,\beta}+1) 
e_{\alpha+\beta}' \qquad \mbox{if $\alpha,\beta,\alpha+\beta\in \Phi$},\]
where $\varepsilon'(\alpha,\beta)\in \{\pm 1\}$. We must show that 
$\varepsilon(\alpha,\beta)=\varepsilon'(\alpha,\beta)$. To see this,
we set again $e_i':= c_i e_{\alpha_i}'$ and $f_i':=c_ie_{-\alpha_i}'$ 
for $i \in I$. Now we use the Isomorphism Theorem for semisimple Lie 
algebras; see, e.g., \cite[Theorem~2.7.14]{mynotes} which contains a proof 
well suited to the setting here. This shows that there exists some $\theta
\in\mbox{Aut}(\fg)$ such that $\theta(e_i)=e_i'$ and $\theta(f_i)=f_i'$ for
all $i\in I$. Now let $\alpha\in \Phi$ and apply $\theta$ to $e_\alpha$, 
expressed as above in terms of iterated Lie brackets. Since there are 
analogous such expressions for~$e_{\alpha}'$, with the same coefficients 
$m_\alpha^{\pm}$~(!), it follows that $\theta (e_\alpha)=e_\alpha'$ for 
all $\alpha\in \Phi$. Hence, the structure constants of $\fg$ with respect 
to $\cL$ and to $\cL'$, respectively, are the same. In particular, we 
have $\varepsilon'(\alpha,\beta)=\varepsilon(\alpha,\beta)$ if $\alpha,
\beta,\alpha+\beta\in \Phi$, as desired. 

(b) Note that the desired identity certainly holds if $\alpha=\alpha_i$
for some $i\in I$, by the two formulae in Definition~\ref{defcan}. 
In general, we use again the Isomorphism Theorem (already cited above).
This shows that there exists some $\omega \in \mbox{Aut}(\fg)$ such that 
$\omega(e_i)=f_i$ and $\omega(f_i)=e_i$ for all $i\in I$. Applying 
$\omega$ to $e_\alpha$ expressed as above, we see that $\omega(e_\alpha)=
e_{-\alpha}$ for all $\alpha\in \Phi$. This immediately yields the 
desired statement. 
\end{proof}

\begin{cor} \label{cancor} For a given simple Lie algebra $\fg$ with 
root system $\Phi$, there are only two possible sign functions 
$\varepsilon(\alpha,\beta)$ (for $\alpha,\beta\in \Phi$ such that 
$\alpha+\beta\in \Phi$) arising from a special Chevalley system $\cL$ 
for $\fg$; if $\varepsilon$ is one of them, then the other one 
is~$-\varepsilon$. 
\end{cor}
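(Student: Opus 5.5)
By Proposition~\ref{propcan1}(a), the sign function $\varepsilon$ of a special Chevalley system $\cL$ is determined by its collection of signs $\{c_i\mid i\in I\}$. So the plan is to count the possible collections $\{c_i\}$ and then compare the sign functions of the two resulting systems.

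First, every special Chevalley system satisfies the condition in Remark~\ref{remcan1}(b): $c_i=-c_j$ whenever $i\neq j$ and $a_{ij}\neq 0$. The incidence graph of $A$ is a connected tree (Remark~\ref{graphA}). Hence once one sign $c_{i_0}$ is fixed, all the others are determined by propagating along edges. So there are at most two admissible collections, $\{c_i\}$ and $\{-c_i\}$. Applying Proposition~\ref{propcan1}(a) to each, at most two sign functions can arise from special Chevalley systems.

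Next we relate the two collections. Suppose $\cL$ is a special Chevalley system with signs $\{c_i\}$ and sign function $\varepsilon$. By Remark~\ref{remcan1}(a), $\cL^-=\{-e_\alpha\}$ is again a special Chevalley system, with signs $\{-c_i\}$. Its structure constants are
\[[e_\alpha^-,e_\beta^-]=-\varepsilon(\alpha,\beta)(q_{\alpha,\beta}+1)e^-_{\alpha+\beta},\]
so its sign function is $-\varepsilon$. Now let $\cL'$ be any special Chevalley system. Its signs are either $\{c_i\}$ or $\{-c_i\}$. In the first case Proposition~\ref{propcan1}(a) gives that its sign function is $\varepsilon$. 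In the second case, comparing with $\cL^-$, it is $-\varepsilon$. Note that $\varepsilon\neq-\varepsilon$ as functions as soon as there is at least one pair $\alpha,\beta$ with $\alpha+\beta\in\Phi$; for rank one there is no such pair and the statement is vacuous.

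The only genuine input is the existence of some special Chevalley system, needed so that "the two" are actually realised. I would treat this as outside the statement: the corollary asserts only that there are at most these two possibilities, related by negation. If existence were required, I would cite Lusztig's canonical basis, or the Frenkel--Kac construction treated later in the paper. Beyond that, the argument is a direct assembly of Remark~\ref{remcan1}, Remark~\ref{graphA} and Proposition~\ref{propcan1}(a), and I expect no real obstacle.
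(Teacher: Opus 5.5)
Your proposal is correct and follows essentially the same route as the paper's proof. The paper also uses Remark~\ref{graphA} to get $c_i'=\delta c_i$ with $\delta\in\{\pm 1\}$, replaces $\cL$ by $\cL^-$ from Remark~\ref{remcan1}(a) when $\delta=-1$, and concludes by Proposition~\ref{propcan1}(a); your side remarks on existence (supplied separately by Theorem~\ref{thmcan}) and on the vacuous rank-one case are consistent with how the paper treats the statement.
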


\begin{proof} Let $\cL=\{e_\alpha\mid \alpha\in \Phi\}$ be a special 
Chevalley system, with corresponding signs $\{c_i\}$ and $\{\varepsilon
(\alpha,\beta)\}$. Let also $\cL'=\{e_\alpha'\mid \alpha \in \Phi\}$ be 
a special Chevalley system, with signs $\{c_i'\}$ and $\{\varepsilon'
(\alpha,\beta)\}$. By Remark~\ref{graphA}, there exists some $\delta\in
\{\pm 1\}$ such that $c_i'=\delta c_i$ for all $i\in I$. If $\delta=-1$, 
then replace $\cL$ by $\cL^-$ as in Remark~\ref{remcan1}(a). Thus, we may 
assume without loss of generality that $c_i'=c_i$ for all $i\in I$. But 
then the corresponding sign functions $\varepsilon(\alpha,\beta)$ and 
$\varepsilon'(\alpha,\beta)$ are the same by Proposition~\ref{propcan1}(a).
\end{proof}

\begin{lem} \label{lem21} Let $\cL=\{e_\alpha \mid \alpha\in \Phi\}$ be 
a special Chevalley system. Let $i\in I$ and $\alpha\in \Phi$ be such 
that $\alpha-\alpha_i\in \Phi$. Then 
\[ \zeta_\alpha=-\zeta_{\alpha-\alpha_i} \qquad \mbox{and}\qquad 
\varepsilon(-\alpha_i,\alpha)=\varepsilon(\alpha_i,\alpha-\alpha_i).\]
\end{lem}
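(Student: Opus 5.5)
We argue separately according to whether $\alpha\in\Phi^+$ or $\alpha\in\Phi^-$. The sign identity comes from an $\mathfrak{sl}_2$-computation along the $\alpha_i$-string. The relation between the $\zeta$'s then follows from Lemma~\ref{cass1}, and the negative case is reduced to the positive one via Proposition~\ref{propcan1}(b).

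\emph{The relation from Lemma~\ref{cass1}.} Apply Lemma~\ref{cass1} to the triple $\gamma_1=\alpha_i$, $\gamma_2=\beta:=\alpha-\alpha_i$, $\gamma_3=-\alpha$. Using $\zeta_{\alpha_i}=1$ and $\zeta_{-\alpha}=\zeta_\alpha$, this gives
$\zeta_\alpha\varepsilon(\alpha_i,\beta)=\zeta_\beta\varepsilon(-\alpha,\alpha_i)$.
By antisymmetry and Proposition~\ref{propcan1}(b), $\varepsilon(-\alpha,\alpha_i)=-\varepsilon(-\alpha_i,\alpha)$. Hence
\[ \zeta_\alpha\,\varepsilon(\alpha_i,\alpha-\alpha_i)=-\zeta_{\alpha-\alpha_i}\,\varepsilon(-\alpha_i,\alpha).\]
So once the sign identity $\varepsilon(-\alpha_i,\alpha)=\varepsilon(\alpha_i,\alpha-\alpha_i)$ is known, $\zeta_\alpha=-\zeta_{\alpha-\alpha_i}$ follows at once. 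Note that Jacobi-type manipulations alone (e.g.\ computing $[e_\alpha,e_{-\alpha}]$ and comparing $h$-coefficients) only reproduce this same relation. So a genuinely new input is needed, and it comes from the specific form of Definition~\ref{defcan}.

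\emph{Case $\alpha\in\Phi^+$.} Since $\alpha\neq\alpha_i$ and $\alpha-\alpha_i\in\Phi$, we have $\beta\in\Phi^+$. Definition~\ref{defcan}(a) gives $\varepsilon(\alpha_i,\beta)=c_i$ and $e_\alpha=c_i(q+1)^{-1}[e_{\alpha_i},e_\beta]$ with $q=q_{\alpha_i,\beta}$. Now $e_{\alpha_i},e_{-\alpha_i},h_i$ form an $\mathfrak{sl}_2$-triple. The span of the $\alpha_i$-string through $\beta$ is an irreducible module of highest weight $n=p+q$, where $p=p_{\alpha_i,\beta}\geq 1$. The standard formulas $ev_k=(n-k+1)v_{k-1}$ and $fv_{k-1}=kv_k$, applied at position $k=p$, give
\[ [e_{-\alpha_i},[e_{\alpha_i},e_\beta]]=p(q+1)e_\beta .\]
Hence $[e_{-\alpha_i},e_\alpha]=c_i\,p\,e_\beta$. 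Since $p=p_{\alpha_i,\alpha}+1=q_{-\alpha_i,\alpha}+1$, comparison with $(\clubsuit)$ yields $\varepsilon(-\alpha_i,\alpha)=c_i=\varepsilon(\alpha_i,\beta)$. This $\mathfrak{sl}_2$ bookkeeping (normalisation and irreducibility of the string module) is the only real step.

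\emph{Case $\alpha\in\Phi^-$.} Now $\alpha-\alpha_i\in\Phi^-$, and Definition~\ref{defcan}(b) gives $\varepsilon(-\alpha_i,\alpha)=c_i$ directly. Put $\alpha':=\alpha_i-\alpha\in\Phi^+$; then $\alpha'-\alpha_i=-\alpha\in\Phi$. By Proposition~\ref{propcan1}(b) and the positive case applied to $\alpha'$,
\[ \varepsilon(\alpha_i,\alpha-\alpha_i)=\varepsilon(-\alpha_i,\alpha')=\varepsilon(\alpha_i,-\alpha)=c_i ,\]
where the last equality is Definition~\ref{defcan}(a), since $-\alpha\in\Phi^+$ and $-\alpha+\alpha_i=\alpha'\in\Phi$. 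So again $\varepsilon(-\alpha_i,\alpha)=\varepsilon(\alpha_i,\alpha-\alpha_i)$. In both cases the displayed relation then gives $\zeta_\alpha=-\zeta_{\alpha-\alpha_i}$.
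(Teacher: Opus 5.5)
Your proof is correct. It uses the same two ingredients as the paper: an $\slm_2$-string computation for the sign identity, and Lemma~\ref{cass1} for the $\zeta$'s. It organises them differently, though.

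The paper proves the sign identity for all $\alpha$ at once. It evaluates $[e_{\alpha_i},[e_{-\alpha_i},e_\alpha]]$ in two ways: via ($\clubsuit$), as $\varepsilon(-\alpha_i,\alpha)\varepsilon(\alpha_i,\beta)$ times a positive integer times $e_\alpha$; and via $\slm_2$-theory, as $q_{\alpha_i,\alpha}(p_{\alpha_i,\alpha}+1)e_\alpha$. It then compares signs. This uses only $[e_{\alpha_i},e_{-\alpha_i}]=h_i$, not the signs $c_i$. Your computation of $[e_{-\alpha_i},[e_{\alpha_i},e_\beta]]$ would work the same way for every $\alpha$ if you kept $\varepsilon(\alpha_i,\beta)$ symbolic instead of substituting $c_i$. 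So your case split, and the detour through Proposition~\ref{propcan1}(b) for $\alpha\in\Phi^-$, are not needed. Contrary to your remark, conditions (a), (b) of Definition~\ref{defcan} are not what drives the sign identity; they only matter for the $\zeta$ relation.

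For the $\zeta$ relation, the paper first reduces to $\alpha\in\Phi^+$ using $\zeta_{-\alpha}=\zeta_\alpha$. It then applies Lemma~\ref{cass1} to $(-\alpha_i,-\beta,\alpha)$, so the two signs that occur, $\varepsilon(-\alpha_i,-\beta)$ and $\varepsilon(\alpha_i,\beta)$, are read off directly from Definition~\ref{defcan}(b),(a). Your triple $(\alpha_i,\beta,-\alpha)$ instead produces $\varepsilon(\alpha_i,-\alpha)$ with $-\alpha$ possibly negative, which Definition~\ref{defcan} does not cover. You repair this with Proposition~\ref{propcan1}(b). That is legitimate, since that proposition is proved earlier without using this lemma. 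But it makes your argument depend on the Isomorphism Theorem, whereas the paper's proof needs only Definition~\ref{defcan}, Lemma~\ref{cass1} and elementary $\slm_2$-theory.
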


\begin{proof} We set $\beta:=\alpha-\alpha_i \in \Phi$ and $\xi:=
\varepsilon(-\alpha_i,\alpha)(q_{-\alpha_i,\alpha}+1)$. Then 
$[e_{-\alpha_i},e_{\alpha}]=\xi e_{\alpha-\alpha_i}=\xi e_{\beta}$ by 
($\clubsuit$). On the other hand, since $\beta+\alpha_i= \alpha\in \Phi$, 
we also have 
\[ [e_{\alpha_i},[e_{-\alpha_i},e_\alpha]]=\xi [e_{\alpha_i},
e_\beta]=\xi \varepsilon(\alpha_i,\beta)(q_{\alpha_i,\beta}+1)
e_{\alpha}.\]
Now, since $e_{\pm \alpha_i}\in \fg_{\pm \alpha_i}$ and $[e_{\alpha_i},
e_{-\alpha_i}]=h_i$, the subspace $\fg_i \subseteq \fg$ spanned by 
$e_{\alpha_i},e_{-\alpha_i},h_i$ is isomorphic to the Lie algebra
$\slm_2(\C)$. Regarding $\fg$ as a $\fg_i$-module and using standard 
facts from the representation theory of $\slm_2(\C)$, it follows that 
\[ [e_{\alpha_i},[e_{-\alpha_i},e_\alpha]]=q_{\alpha_i,\alpha}(p_{\alpha_i,
\alpha}+1) e_\alpha;\]
see, e.g., \cite[Remark~2.2.15(c)]{mynotes} or \cite[\S 7.2]{H}.
Hence, we conclude that
\begin{align*}
q_{\alpha_i,\alpha}&(p_{\alpha_i,\alpha}+1)=\xi \varepsilon(\alpha_i, 
\beta)(q_{\alpha_i,\beta}+1)\\ &=\varepsilon(-\alpha_i,\alpha)\varepsilon
(\alpha_i, \beta)(q_{-\alpha_i,\alpha}+1)(q_{\alpha_i,\beta}+1).
\end{align*}
Since $p_{\alpha_i,\alpha}$, $q_{\alpha_i,\alpha}$, $q_{\alpha_i,\beta}$ 
and $q_{-\alpha_i,\alpha}$ are in $\Z_{\geq 0}$, the right hand side of 
the above identity is non-zero and so we must have $\varepsilon(-\alpha_i,
\alpha)=\varepsilon(\alpha_i,\beta)$, as claimed. 

Now consider the identity $\zeta_\alpha=-\zeta_\beta$. Since 
$\zeta_{-\alpha}= \zeta_\alpha$ and $\zeta_{-\beta}=\zeta_\beta$, it is 
enough to prove this for the case where $\alpha\in \Phi^+$; note that then 
also $\beta\in \Phi^+$. We use Lemma~\ref{cass1} with $\gamma_1:=-\alpha_i$, 
$\gamma_2:=-\beta$ and $\gamma_3:=\alpha$. This yields the identity:
\[ \zeta_\alpha\varepsilon(-\alpha_i,-\beta)=\zeta_{-\beta}\varepsilon
(\alpha,-\alpha_i)=\zeta_\beta\varepsilon(\alpha,-\alpha_i)=
-\zeta_\beta\varepsilon(-\alpha_i,\alpha).\]
By Definition~\ref{defcan}(b), the left hand side equals $\zeta_\alpha c_i$.
By the first part of the proof and Definition~\ref{defcan}(a), the right 
hand side of the above identity equals $-\zeta_\beta \varepsilon(\alpha_i,
\beta)=-\zeta_\beta c_i$. Hence, we must have $\zeta_\alpha=-\zeta_\beta$. 
\end{proof}

\begin{prop} \label{propcan2} Let $\cL$ be a special Chevalley system as
above, with corresponding signs $\{c_i\mid i \in I\}$. Let $i \in I$.
Then the formulae in {\rm (a)} and {\rm (b)} of Definition~\ref{defcan} 
actually hold for all $\alpha\in \Phi$, that is, we have  
\begin{alignat*}{2}
[e_{\alpha_i},e_\alpha]&=c_i(q_{\alpha_i,\alpha}+1)e_{\alpha+\alpha_i} &&
\quad \mbox{for any $\alpha\in \Phi$, $\alpha+\alpha_i\in \Phi$},\\
[e_{-\alpha_i},e_\alpha]&=c_i(p_{\alpha_i,\alpha}+1)e_{\alpha-\alpha_i} 
&&\quad \mbox{for any $\alpha\in \Phi$, $\alpha-\alpha_i\in \Phi$}.
\end{alignat*}
Furthermore, $[e_\alpha,e_{-\alpha}]=-(-1)^{\hgt(\alpha)}h_\alpha$
for all $\alpha\in \Phi$.
\end{prop}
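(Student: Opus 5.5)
The plan is to first prove the statement about $\zeta_\alpha$, and then deduce the two bracket formulae from Lemma~\ref{lem21}, Proposition~\ref{propcan1}(b) and Lemma~\ref{cass1}.

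For the last assertion, note that $\zeta_{\alpha_i}=1$ for all $i\in I$, by Definition~\ref{defcan}. Now let $\alpha\in\Phi^+$ and choose a chain $\alpha_{i_1},\ \alpha_{i_1}+\alpha_{i_2},\ \ldots,\ \alpha$ as in the proof of Proposition~\ref{propcan1}. Each step has the form $\beta\to\beta+\alpha_{i_j}$ with both roots in $\Phi$, so Lemma~\ref{lem21} changes the sign of $\zeta$ at each step. Hence $\zeta_\alpha=(-1)^{\hgt(\alpha)-1}=-(-1)^{\hgt(\alpha)}$. Since $\zeta_{-\alpha}=\zeta_\alpha$ and $\hgt(-\alpha)=-\hgt(\alpha)$ has the same parity, the formula holds for all $\alpha\in\Phi$.

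For the first formula, it remains to treat $\alpha\in\Phi^-$ with $\alpha+\alpha_i\in\Phi$. The case $\alpha=-\alpha_i$ is excluded, since $0\notin\Phi$. So $\alpha+\alpha_i\in\Phi^-$. Write $\alpha=-\gamma$ with $\gamma\in\Phi^+$, so that $\gamma-\alpha_i\in\Phi^+$. By Proposition~\ref{propcan1}(b),
\[\varepsilon(\alpha_i,-\gamma)=\varepsilon(-\alpha_i,\gamma).\]
By Lemma~\ref{lem21},
\[\varepsilon(-\alpha_i,\gamma)=\varepsilon(\alpha_i,\gamma-\alpha_i).\]
By Definition~\ref{defcan}(a), applied to the positive root $\gamma-\alpha_i$, the latter sign equals $c_i$. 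Thus $\varepsilon(\alpha_i,\alpha)=c_i$, which is the first formula.

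For the second formula we take $\alpha\in\Phi^+$ with $\alpha-\alpha_i\in\Phi$, so that $\alpha-\alpha_i\in\Phi^+$. Lemma~\ref{lem21} gives
\[\varepsilon(-\alpha_i,\alpha)=\varepsilon(\alpha_i,\alpha-\alpha_i)=c_i,\]
again by Definition~\ref{defcan}(a). Moreover $q_{-\alpha_i,\alpha}=p_{\alpha_i,\alpha}$, so this is exactly the claimed formula.

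The potential obstacle is bookkeeping rather than substance. One must make sure that the $p$ and $q$ appearing in ($\clubsuit$) match those in the statement, which they do via $q_{-\alpha_i,\alpha}=p_{\alpha_i,\alpha}$. One must also check that Lemma~\ref{lem21} is applicable in each case, i.e.\ that its hypothesis ``$\alpha-\alpha_i\in\Phi$'' is met with the correct root playing the role of $\alpha$. In particular, Lemma~\ref{cass1} is not needed beyond its use in Lemma~\ref{lem21}.
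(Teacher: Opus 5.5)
Your proof is correct and essentially the paper's argument: the $\zeta_\alpha$ computation (your chain is the paper's induction on height) and the $e_{-\alpha_i}$ formula for positive $\alpha$ are exactly as in the paper. The only deviation is in the $e_{\alpha_i}$ formula for negative $\alpha$, where you go through Proposition~\ref{propcan1}(b); the paper instead applies Lemma~\ref{lem21} to the negative root $\alpha+\alpha_i$ and then Definition~\ref{defcan}(b), i.e.\ $\varepsilon(\alpha_i,\alpha)=\varepsilon(-\alpha_i,\alpha+\alpha_i)=c_i$, which avoids the Isomorphism Theorem that Proposition~\ref{propcan1}(b) relies on.
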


\begin{proof} Let $\alpha\in \Phi$ be such that $\beta:=\alpha-\alpha_i
\in \Phi$. Then 
\[ [e_{-\alpha_i},e_\alpha]=\varepsilon(-\alpha_i,\alpha)(q_{-\alpha_i,
\alpha}+1)e_{\beta}.\]
If $\alpha\in \Phi^-$, then the assertion is clear since $\varepsilon
(-\alpha_i,\alpha)=c_i$ by Definition~\ref{defcan}(b). Now let $\alpha
\in \Phi^+$; note that also $\beta\in \Phi^+$. By Lemma~\ref{lem21} and 
Definition~\ref{defcan}(a), we have $\varepsilon(-\alpha_i,\alpha)=
\varepsilon(\alpha_i,\beta)=c_i$, as required.

Next consider $\varepsilon(\alpha_i,\alpha)$ where $\alpha\in \Phi$ is 
such that $\alpha':=\alpha+\alpha_i\in \Phi$. If $\alpha\in \Phi^+$, then
the assertion is clear by Definition~\ref{defcan}(a). If $\alpha\in 
\Phi^-$, then  
\[\varepsilon(\alpha_i,\alpha)=\varepsilon(\alpha_i,\alpha'-\alpha_i)=
\varepsilon(-\alpha_i,\alpha')=c_i\]
as required, where the second equality holds by Lemma~\ref{lem21} and the 
third equality holds by Definition~\ref{defcan}(b).

Finally, consider the identity $[e_\alpha,e_{-\alpha}]=\zeta_\alpha 
h_\alpha$ for $\alpha\in \Phi$. We want to prove that $\zeta_\alpha=
-(-1)^{\hgt(\alpha)}$. Since $h_{\alpha}=-h_{-\alpha}$, it is sufficient to 
do this assuming that $\alpha\in \Phi^+$. Then we proceed by induction on 
$\hgt(\alpha)$. If $\hgt(\alpha)=1$, then $\alpha=\alpha_i$ for some 
$i\in I$ and $\zeta_{\alpha_i}=1$ holds by assumption. Now let 
$\hgt(\alpha)>1$. We can find some $i\in I$ such that $\beta:=\alpha-
\alpha_i\in \Phi^+$. But then, by Lemma~\ref{lem21} and induction, we 
have $\zeta_\alpha=-\zeta_\beta=(-1)^{\hgt(\beta)}=-(-1)^{\hgt(\alpha)}$, 
as claimed.
\end{proof}

\begin{rem} \label{thmcan0} Let $\cL=\{e_\alpha\mid \alpha\in \Phi\}$ be 
a special Chevalley system. Let us fix a sign $\epsilon\in \{\pm 1\}$. 
Then we obtain a new (not necessarily special) Chevalley system 
$\breve{\cL}^\epsilon=\{\breve{e}_\alpha^\epsilon\mid \alpha\in \Phi\}$ 
by setting
\begin{equation*}
\breve{e}_\alpha^\epsilon:=\left\{\begin{array}{rl} e_\alpha  & \qquad 
\mbox{if $\alpha\in \Phi^+$},\\ -\epsilon (-1)^{\hgt(\alpha)}e_\alpha & 
\qquad \mbox{if $\alpha \in \Phi^-$}. \end{array}\right. 
\end{equation*}
Consequently, for any $i\in I$ and $\alpha\in \Phi$, we have
\begin{alignat*}{2}
[\breve{e}_{\alpha_i}^\epsilon,\breve{e}_\alpha^\epsilon]&=
c_i(q_{\alpha_i,\alpha}+1)\breve{e}_{\alpha+\alpha_i}^\epsilon &&
\quad \mbox{if $\alpha\in \Phi^+$ and $\alpha+\alpha_i\in \Phi$},\\
[\breve{e}_{\alpha_i}^\epsilon,\breve{e}_\alpha^\epsilon]&=
-c_i(q_{\alpha_i,\alpha}+1)\breve{e}_{\alpha+\alpha_i}^\epsilon &&
\quad \mbox{if $\alpha\in \Phi^-$ and $\alpha+\alpha_i\in \Phi$},\\
[\breve{e}_{-\alpha_i}^\epsilon,\breve{e}_\alpha^\epsilon]&=
\epsilon c_i(p_{\alpha_i,\alpha}+1)\breve{e}_{\alpha-\alpha_i}^\epsilon
&&\quad \mbox{if $\alpha\in \Phi^+$ and $\alpha-\alpha_i\in \Phi$},\\
[\breve{e}_{-\alpha_i}^\epsilon,\breve{e}_\alpha^\epsilon]&=
-\epsilon c_i(p_{\alpha_i,\alpha}+1)\breve{e}_{\alpha-\alpha_i}^\epsilon
&&\quad \mbox{if $\alpha\in \Phi^-$ and $\alpha-\alpha_i\in \Phi$}.
\end{alignat*}
Furthermore, Proposition~\ref{propcan2} implies that 
$[\breve{e}_\alpha^\epsilon,\breve{e}_{-\alpha}^\epsilon]=\epsilon 
h_\alpha$ for all $\alpha\in \Phi$. Thus, if $\epsilon=1$, then 
$\breve{\cL}^+$ satisfies the standard normalisation of the root elements 
employed, for example, in Humphreys \cite[\S 8.3]{H}. On the other hand, 
if $\epsilon=-1$, then $\breve{\cL}^-$ corresponds to the normalisation in
Bourbaki \cite[Ch.~VIII, \S 2, no.~4]{bour2}. It is probably a matter of 
taste which normalisation is preferable in a given context. In any case, 
it is sufficient to determine the structure constants for either~$\cL$ or 
$\breve{\cL}^\epsilon$. We shall see that the Frenkel--Kac approach (to be 
discussed in Section~\ref{secflm}) is most directly related to~$\cL$.
\end{rem}

\begin{rem} \label{allsigns} Let $\cL$ be a special Chevalley system.
Let $\alpha,\beta\in \Phi$ be arbitrary such that $\alpha+\beta\in 
\Phi$. We have the following two identities:
\begin{align*}
\varepsilon(\alpha,\beta)& =\varepsilon(-\alpha,-\beta) \tag{a}\\
\varepsilon(\alpha,\beta)&=(-1)^{\hgt(\beta)}\varepsilon(\beta,
-\alpha-\beta)=(-1)^{\hgt(\alpha)}\varepsilon(-\alpha-\beta,\alpha). 
\tag{b}
\end{align*}
(The first one is contained in Proposition~\ref{propcan1}(b); the second 
one follows from Proposition~\ref{propcan2} and  Lemma~\ref{cass1}, 
applied with $\gamma_1:=\alpha$, $\gamma_2:=\beta$, $\gamma_3=
-\alpha-\beta$.) Assume that the signs $\varepsilon(\alpha,\beta)$ are 
known whenever $\alpha,\beta\in \Phi^+$. Then the above two identities
provide simple rules for computing the signs $\varepsilon(\alpha,\beta)$ 
for \textit{all} $\alpha,\beta\in \Phi$. 
\end{rem}

\begin{thm}[Lusztig \protect{\cite{L1,L2,L5}}] \label{thmcan} There 
exists a special Chevalley system $\cL=\{e_\alpha\mid \alpha\in \Phi\}$
for $\fg$.
\end{thm}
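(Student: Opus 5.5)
We have to prove that a special Chevalley system exists. The plan is a direct construction: first build root vectors by iterated brackets, then normalise them one root string at a time. Fix a choice of signs $\{c_i\}$ as in Remark~\ref{graphA}, i.e.\ $c_i=-c_j$ whenever $a_{ij}\neq 0$. Choose Chevalley generators $e_i\in\fg_{\alpha_i}$ and $f_i\in\fg_{-\alpha_i}$ with $[e_i,f_i]=h_i$, and put $e_{\pm\alpha_i}:=c_ie_i$ and $c_if_i$ respectively.

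The idea is to define $e_\alpha$ for $\alpha\in\Phi^+$ by induction on $\hgt(\alpha)$ using $\alpha_i$-strings. Let $\alpha\in\Phi^+$ be non-simple and let $i$ satisfy $\alpha-\alpha_i\in\Phi^+$. Then $\alpha$ lies in the $\alpha_i$-string through $\beta_0:=\alpha-q\alpha_i$, where $q:=q_{\alpha_i,\alpha}$ and $\beta_0-\alpha_i\notin\Phi$. Consider the irreducible $\slm_2$-module spanned by the string $\beta_0,\beta_0+\alpha_i,\ldots$ (of length at most 4). In it, the vectors $v_k:=(k!)^{-1}\operatorname{ad}(c_ie_i)^k e_{\beta_0}$ satisfy $[c_ie_i,v_k]=(k+1)v_{k+1}$. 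By the $\slm_2$-theory cited in Lemma~\ref{lem21}, this reads $c_i(q_{\alpha_i,\beta_0+k\alpha_i}+1)$ up to the sign $c_i$ absorbed in $e_{\alpha_i}$. This gives (a) along the string, and also (b) on the negative side by applying the Chevalley involution $\omega$ of Proposition~\ref{propcan1}(b) to define $e_{-\alpha}:=\omega(e_\alpha)$. The normalisation of the lowest element $e_{\beta_0}$ is dictated by a root of smaller height.

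The main obstacle is well-definedness. The string-based recipe must give the same $e_\alpha$ whichever $i$ is used, and strings for different $i$ must be compatible. I would avoid this by reducing to rank~2. The conditions (a),(b) only involve $\alpha_i$-strings, so I would first treat the simply laced case, where all strings have length $\leq 2$. There (a) says $[e_{\alpha_i},e_\alpha]=c_ie_{\alpha+\alpha_i}$ whenever $\alpha,\alpha+\alpha_i\in\Phi^+$, so $e_\alpha=\pm[e_{i_r},\ldots[e_{i_2},e_{i_1}]\ldots]$ along any path, with a fixed sign. Independence of the path then reduces to commutation checks of the form $[e_i,[e_j,x]]$ versus $[e_j,[e_i,x]]$, using the Jacobi identity $[e_i,[e_j,x]]-[e_j,[e_i,x]]=[[e_i,e_j],x]$. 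This is where the sink/source condition $c_i=-c_j$ must enter: it should make the contributions cancel exactly. Concretely, I would check that $e_\alpha:=c_{i_r}\cdots c_{i_1}\cdot[e_{\alpha_{i_r}},\ldots]$-type definitions agree on all "diamonds" $\beta,\beta+\alpha_i,\beta+\alpha_j,\beta+\alpha_i+\alpha_j$, and the "hexagons" arising when $a_{ij}=-1$. A cleaner alternative for the simply laced case is to anticipate Section~\ref{secflm}: write $\fg$ via Frenkel--Kac with a bimultiplicative cocycle $\varepsilon_0$. Choose it with $\varepsilon_0(\alpha_i,\alpha_j)=c_i$ for $a_{ij}=-1$, $\varepsilon_0(\alpha_j,\alpha_i)=c_j$, and $\varepsilon_0(\alpha_i,\alpha_i)=-1$. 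Then verify (a) directly from $\varepsilon_0(\alpha_i,\alpha)=\prod_j\varepsilon_0(\alpha_i,\alpha_j)^{n_j}$; this is a parity computation using that the graph is bipartite.

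For non-simply-laced $\Phi$, I would use folding. Realise $\fg$ as the fixed points of a diagram automorphism $\sigma$ of a simply laced $\tilde\fg$, with the sink/source signs chosen $\sigma$-invariant; this is possible since $\sigma$ preserves the bipartition. Take the special system $\{\tilde e_{\tilde\alpha}\}$ of $\tilde\fg$, which is $\sigma$-equivariant by the uniqueness in Proposition~\ref{propcan1}(a). Then define $e_\alpha$ as the sum of $\tilde e_{\tilde\alpha}$ over a $\sigma$-orbit, rescaled by a suitable positive integer (orbit size, or $2$ for the $G_2$/$B$ exceptional orbits where $\tilde\alpha+\sigma\tilde\alpha\in\tilde\Phi$). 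Checking (a) reduces to counting how many orbit members pair nontrivially. I expect these counts to reproduce $q_{\alpha_i,\alpha}+1$ with positive sign, and this coefficient verification is the delicate part.
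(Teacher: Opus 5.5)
Your proposal is not yet a proof. Its one fully concrete step is wrong, and the other steps are left as expectations.

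\emph{Simply laced case.} The error is in your ``cleaner alternative''. With $\varepsilon_0(\alpha_i,\alpha_j)=c_i$ for $a_{ij}=-1$ and $\varepsilon_0(\alpha_i,\alpha_i)=-1$, your cocycle is exactly the Kac cocycle of Example~\ref{expflm1} for a sink/source orientation. The paper notes that such a cocycle never gives a special system, and here is why. By bimultiplicativity, $[e_{\alpha_i},e_{-\alpha_i}]=\varepsilon_0(\alpha_i,\alpha_i)h_i=-h_i$, which violates the first requirement of Definition~\ref{defcan}. Condition (a) fails too. If $\alpha=\sum_j n_j\alpha_j\in\Phi^+$ and $\alpha+\alpha_i\in\Phi$, then $\langle\alpha_i,\alpha\rangle=-1$ forces $\sum_{j\neq i,\,a_{ij}\neq 0}n_j=2n_i+1$. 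Hence $\varepsilon_0(\alpha_i,\alpha)=(-1)^{n_i}c_i^{2n_i+1}=(-1)^{n_i}c_i$. For instance, in type $D_4$ with $\alpha_2$ the branch vertex, take $\alpha=\alpha_1+\alpha_2+\alpha_3+\alpha_4$ and $i=2$; you get $-c_2$ instead of $c_2$.

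The missing idea is to put $+1$ on the diagonal, i.e.\ $\varepsilon_0(\alpha_i,\alpha_j)=c_i^{a_{ij}}$. Then $\varepsilon_0(\pm\alpha_i,\alpha)=c_i^{\langle\alpha_i,\alpha\rangle}=c_i$ whenever $\langle\alpha_i,\alpha\rangle=\pm 1$. (FLM3) still holds because $c_ic_j=-1$ on edges, and Theorem~\ref{thmflm} gives ($\clubsuit$) for free. This is Proposition~\ref{propGL}; with that correction your simply laced argument becomes the paper's alternative proof.

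Your other simply laced route (path independence via diamonds and hexagons) is only asserted: ``it should make the contributions cancel'' is not an argument. That route also never establishes ($\clubsuit$) for arbitrary pairs, which is part of Definition~\ref{defcan}. The paper needs a separate cited result for exactly this point.

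\emph{Non-simply-laced case.} The step that carries all the content is left as an expectation, and your rescaling conflicts with the normalisation. Suppose $\alpha_i$ comes from a $\sigma$-orbit of pairwise orthogonal simple roots, and $e_{\pm\alpha_i}$ are both $\lambda$ times the orbit sums, as your recipe with $e_{-\alpha}:=\omega(e_\alpha)$ produces. Then $[e_{\alpha_i},e_{-\alpha_i}]=\lambda^2h_i$, so only $\lambda=\pm 1$ is allowed. Also, orbits with $\tilde{\alpha}+\sigma\tilde{\alpha}\in\tilde{\Phi}$ do not occur in the foldings $D_{r+1}\to B_r$, $A_{2r-1}\to C_r$, $E_6\to F_4$, $D_4\to G_2$, since such a root would map to $2\alpha\notin\Phi$.

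With plain orbit sums, the signs in (a) and (b) are indeed automatically $c_i$, by Proposition~\ref{propcan2} in $\tilde{\fg}$ and $\sigma$-invariance of the signs. But you still have to prove two things:
\begin{itemize}
\item for each target root, the number of pairs from the two orbits summing to it is exactly $q_{\alpha_i,\alpha}+1$;
\item the folded collection satisfies ($\clubsuit$) for all pairs, which needs the invariance property (F4).
\end{itemize}
That is precisely the content of Theorem~\ref{thmF1}. The paper imports it from Geck--Lang, where it is proved using Lusztig's canonical basis.

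For comparison, the paper's proof of this theorem is only a translation. It starts from Lusztig's elements satisfying (L1)--(L3), rescales so that $c_i^+=\pm 1$, and quotes that the result is then a Chevalley system. It then changes the sign of the negative root vectors via ($*$).
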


\begin{proof} All the work has been done in \cite{L1,L2,L5} but the
formulation there is a bit different; so we need to say a few words about 
the translation from \cite{L1,L2,L5} to the present setting. For each 
$i\in I$ let us choose some elements $e_i\in \fg_{\alpha_i}$ and $f_i\in
\fg_{-\alpha_i}$ such that $h_i=[e_i,f_i]$. Then, by \cite{L1,L2,L5}, 
there exists a collection of root elements $\cL=\{\be_\alpha^+\mid 
\alpha\in \Phi\}$ such that the following relations hold.
\begin{alignat*}{2}
[f_i,\be_{\alpha_i}^+]&=[e_i,\be_{-\alpha_i}] && \quad \mbox{for all
$i\in I$},\tag{L1}\\
[e_i,\be_\alpha^+]&=(q_{\alpha_i,\alpha}+1)\be_{\alpha+\alpha_i}^+ &&\quad 
\mbox{if $i\in I$, $\alpha\in \Phi$, $\alpha+\alpha_i\in \Phi$}, \tag{L2}
\\ [f_i,\be_\alpha^+]&=(p_{\alpha_i,\alpha}+1) \be_{\alpha-\alpha_i}^+ 
&& \quad \mbox{if $i\in I$, $\alpha\in \Phi$, $\alpha-\alpha_i\in \Phi$}. 
\tag{L3}
\end{alignat*}
(See also the exposition in \cite[\S 2.7]{mynotes}.) Such a collection is 
unique up to replacing each $\be_\alpha^+$ by $\xi \be_\alpha^+$, for some 
$0\neq \xi \in \C$. We can certainly write $\be_{\alpha_i}^+=c_i^+e_i$ and 
$\be_{-\alpha_i}^+=c_i^-f_i$ for $i\in I$, where $0\neq c_i^\pm \in \C$. 
Using (L1) one immediately sees that $c_i^-=-c_i^+$ for $i\in I$. So we have 
\[ \be_{\alpha_i}^+=c_i^+e_i \quad\mbox{and}\quad \be_{-\alpha_i}^+
=-c_i^+f_i \qquad \mbox{for all $i\in I$}.\]
Furthermore, let $i\neq j$ in $I$ be such that $a_{ij}\neq 0$. Arguing as 
in Remark~\ref{remcan1}(b), one sees that $c_j^+=-c_i^+$. Since the graph
of $A$ is connected it follows that, if we fix some $i_0\in I$, then 
$c_i^+=\pm c_{i_0}^+$ for all $i\in I$. Hence, replacing each 
$\be_\alpha^+$ by $c_{i_0}^{-1}\be_\alpha^+$ we obtain a new collection 
of root elements which still satisfy (L1), (L2), (L3), but where now we 
have $c_i^+=\pm 1$. Thus, we may assume without loss of generality that 
$c_i^+=\pm 1$ for all $i\in I$. It then also follows that $\{\be_\alpha^+
\mid \alpha\in \Phi\}$ is a Chevalley system, that is, ($\clubsuit$) 
holds; see \cite[Theorem~5.7]{mylie} or \cite[Theorem~2.7.16]{mynotes} 
for a detailed argument. Now set
\begin{equation*}
e_\alpha:=\left\{\begin{array}{rl} \be_\alpha^+  & \quad \mbox{if
$\alpha\in \Phi^+$},\\ -\be_\alpha^+ & \quad \mbox{if $\alpha \in \Phi^-$}; 
\end{array}\right. \tag{$*$}
\end{equation*}
in particular, $e_{\alpha_i}=\be_{\alpha_i}^+=c_i^+e_i$ and $e_{-\alpha_i}
=-\be_{-\alpha_i}^+=c_i^+ f_i$ for $i\in I$. First of all, this
implies that $[e_{\alpha_i},e_{-\alpha_i}]=[e_i,f_i]=h_i$ for $i\in I$.
Secondly, (L2) and (L3) translate into the two identities in 
Proposition~\ref{propcan2}. 
\end{proof}

In the following two sections, we address the problem of computing the
signs $\varepsilon(\alpha,\beta)$ in the structure constants for a special 
Chevalley system of~$\fg$. By Proposition~\ref{cancor}, these signs are 
uniquely determined once we fix one of the two possible collections of 
signs $\{c_i\mid i \in I\}$ satisfying the condition in 
Remark~\ref{remcan1}(b). Note that, by ($*$) in the above proof, the 
signs $\varepsilon(\alpha,\beta)$ also describe the structure constants 
for the positive part of Lusztig's canonical basis.

\section{The construction of Frenkel and Kac} \label{secflm}

We keep the general setting of the previous section, but now we assume
that the root system $\Phi$ is simply laced or, equivalently, that $A$ is 
symmetric. Then all roots in $\Phi$ have the same length and we can 
normalise $\langle \;,\;\rangle$ such that
\begin{center}
\fbox{$\langle \alpha,\alpha\rangle=2\qquad \mbox{for all 
$\alpha\in \Phi$}$.}
\end{center} 
Consequently, the Cartan matrix $A$ is the Gram matrix of $\langle \;,
\;\rangle$ with respect to the basis $\{\alpha_i\mid i \in I\}$ of~$E$. 
Here are some further consequences of the above assumption.

\begin{rem} \label{remflm1} Let $\alpha,\beta\in \Phi$ be such that 
$\alpha+\beta\in \Phi$. Since $\langle \gamma,\gamma\rangle=2$ for all 
$\gamma\in \Phi$, we conclude that $\langle \alpha,\beta\rangle=-1$. 
Similarly, if $\beta-\alpha\in \Phi$, then $\langle \alpha,\beta\rangle=1$. 
Hence, if $\alpha+\beta\in \Phi$, then $\beta-\alpha \not\in \Phi$ and
so $q_{\alpha,\beta}=0$; furthermore, $p_{\alpha,\beta}=1$ since 
$-1=\langle \alpha,\beta\rangle=q_{\alpha, \beta}-p_{\alpha,\beta}$.
\end{rem}

The following discussion is based on Frenkel--Lepowsky--Meurman 
\cite[Chap.~6]{FLM}. Let $Q:=\Z\Phi\subseteq E$ be the lattice spanned by 
$\Phi$ and 
\[\varepsilon \colon Q\times Q \rightarrow \{\pm 1\}\]
be a function satisfying the following conditions, for all 
$\alpha,\beta, \gamma\in Q$:
\begin{gather*}
\varepsilon(\alpha,\beta)\varepsilon(\alpha+\beta,\gamma) =
\varepsilon(\beta,\gamma)\varepsilon(\alpha,\beta+\gamma),\tag{FLM1}\\
\varepsilon(\alpha,0)=\varepsilon(0,\alpha)=1,\tag{FLM2}\\
\varepsilon(\alpha,\beta)\varepsilon(\beta,\alpha)=(-1)^{\langle
\alpha,\beta\rangle}.\tag{FLM3}
\end{gather*}

\begin{rem} \label{remflm2} Assume that $\varepsilon\colon Q\times Q 
\rightarrow \{\pm 1\}$ is a bi-multiplicative function, that is, we have 
\begin{align*}
\begin{array}{c} \varepsilon(\alpha+\beta,\gamma)=\varepsilon(\alpha,\gamma)
\varepsilon (\beta,\gamma)\\ \varepsilon(\alpha,\beta+\gamma)=
\varepsilon(\alpha,\beta)\varepsilon(\alpha,\gamma)\end{array}
\qquad\mbox{for all $\alpha,\beta,\gamma\in \Phi$}.
\end{align*}
Then (FLM1), (FLM2) immediately follow. And we automatically have 
\[\varepsilon(\alpha,-\beta)=\varepsilon(\alpha,\beta)=\varepsilon(-\alpha,
\beta) \qquad \mbox{for all $\alpha,\beta \in Q$}. \]
In all our examples below, the above conditions will be satisfied. 
\end{rem}

Let $\fg$ be a vector space over $\C$ with a basis $\{h_i\mid i \in I\}\cup 
\{e_\alpha\mid\alpha\in \Phi\}$. Let $\fh:=\langle h_i\mid i \in I\rangle_\C
\subseteq \fg$. For $\alpha\in \Phi$ we write $\alpha=\sum_{i \in I} 
n_i\alpha_i$ where $n_i\in \Z$ for all $i\in I$; then we also set 
$h_\alpha:=\sum_{i \in I} n_ih_i\in \fh$. Define a bilinear product 
$[\;,\;]\colon \fg\times \fg\rightarrow \C$ by the following rules,
where $i,j\in I$ and $\alpha,\beta \in \Phi$:
\begin{align*}
\left[h_i,h_j\right]&=0,\\ \left[h_i,e_\alpha\right]&=
-\left[e_\alpha,h_i\right]=\langle\alpha_i,\alpha\rangle e_\alpha,\\
\left[e_\alpha,e_{-\alpha}\right]&=\varepsilon(\alpha,-\alpha)h_\alpha,\\
\left[e_\alpha,e_\beta\right]&=0 \qquad \mbox{if $\alpha+\beta\not\in\Phi
\cup\{\underline{0}\}$},\\
\left[e_\alpha,e_\beta\right]&=\varepsilon(\alpha,\beta)e_{\alpha+\beta} 
\qquad \mbox{if $\alpha+\beta\in\Phi$}.
\end{align*}
Now we can state:

\begin{thm}[Frenkel--Lepowsky--Meurman \protect{\cite[\S 6.2]{FLM}}] 
\label{thmflm} Recall that the function $\varepsilon\colon Q\times Q
\rightarrow\{\pm 1\}$ is assumed to satisfy {\rm (FLM1)--(FLM3)}. Equipped 
with the above product, $\fg$ is a simple Lie algebra, $\fh\subseteq \fg$
is a Cartan subalgebra and the collection $\cL=\{e_\alpha\mid \alpha\in 
\Phi\}$ is a Chevalley system for~$\fg$, with corresponding sign
function $\varepsilon(\alpha,\beta)$.
\end{thm}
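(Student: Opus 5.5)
The plan is to verify directly that the product just defined is a Lie bracket, then read off the structural claims. First I would record the elementary consequences of (FLM1)--(FLM3). Taking $\beta=\alpha$ in (FLM3) gives $\varepsilon(\alpha,\alpha)^2=1=(-1)^{\langle\alpha,\alpha\rangle}$, which is harmless. For $\alpha,\beta,\alpha+\beta\in\Phi$ we have $\langle\alpha,\beta\rangle=-1$ by Remark~\ref{remflm1}, so (FLM3) gives $\varepsilon(\alpha,\beta)=-\varepsilon(\beta,\alpha)$. From (FLM1) with $\gamma=-\beta$ and (FLM2) I would derive
\[\varepsilon(\alpha,-\alpha)=\varepsilon(-\alpha,\alpha),\qquad
\varepsilon(\alpha,\beta)\,\varepsilon(\alpha+\beta,-\beta)=\varepsilon(\beta,-\beta)\,\varepsilon(\alpha,0)=\varepsilon(\beta,-\beta),\]
and similar identities. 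Anti-symmetry of $[\;,\;]$ then follows case by case. On $\fh$ and between $\fh$ and $e_\alpha$ it holds by definition. For $[e_\alpha,e_{-\alpha}]$ one needs $\varepsilon(\alpha,-\alpha)h_\alpha=-\varepsilon(-\alpha,\alpha)h_{-\alpha}$, which holds since $h_{-\alpha}=-h_\alpha$. For $\alpha+\beta\in\Phi$ it is the sign relation just derived. The bracket $[e_\alpha,e_\alpha]=0$ holds because $2\alpha\notin\Phi\cup\{0\}$.

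The main step, and the main obstacle, is the Jacobi identity $J(x,y,z)=0$ on basis elements. Since every basis element is an $\fh$-weight vector and $\operatorname{ad}h$ acts as a derivation, Jacobi holds whenever at least two arguments lie in $\fh$, and also when exactly one does (using $[h,h_\alpha]=0$ and the weight grading). So only $J(e_\alpha,e_\beta,e_\gamma)$ remains, and it lies in weight $\alpha+\beta+\gamma$. I would split into cases by that weight.

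\emph{Case 1: $\alpha+\beta+\gamma=0$.} The sum lies in $\fh$ and equals $\varepsilon(\alpha,\beta)\varepsilon(\alpha+\beta,\gamma)h_{-\gamma}$ plus its cyclic versions. Using (FLM1) and (FLM2) I expect all three coefficients to coincide, and then $h_\alpha+h_\beta+h_\gamma=0$ finishes this case.

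\emph{Case 2: $\alpha+\beta+\gamma=\delta\in\Phi$.} This includes the degenerate subcase $\gamma=-\alpha$, where terms like $[[e_\alpha,e_{-\alpha}],e_\beta]=\varepsilon(\alpha,-\alpha)\langle\alpha,\beta\rangle e_\beta$ appear. Here I would enumerate the possible inner products: all roots have norm $2$, so $\langle\cdot,\cdot\rangle\in\{0,\pm1,\pm2\}$, and I would determine which of the partial sums are roots. Then (FLM1) reduces every term to $\pm\varepsilon(\alpha,\beta)\varepsilon(\alpha+\beta,\gamma)e_\delta$, and (FLM3) supplies the relative signs $(-1)^{\langle\cdot,\cdot\rangle}$, so the terms should cancel.

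\emph{Case 3: $\alpha+\beta+\gamma\notin\Phi\cup\{0\}$.} All terms vanish, unless some partial sum is a root and the total is a root; that situation is excluded here.

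I expect the bookkeeping in the degenerate subcase to be the trickiest part. I would handle it by noting that $\langle\alpha,\beta\rangle=-1$ exactly when $\alpha+\beta\in\Phi$, and by matching $\varepsilon(\alpha,-\alpha)\varepsilon(\alpha,\beta)$ against $\varepsilon(\alpha,-\alpha+\beta)$-type products via (FLM1).

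Finally I would establish the structural claims. $\fh$ is abelian and acts diagonally with root-space decomposition $\fh\oplus\bigoplus\C e_\alpha$, each root space one-dimensional with weights distinct and nonzero since $A$ is nonsingular. To show simplicity, I would take a nonzero ideal and observe that it is $\fh$-stable, hence contains some $e_\alpha$ or a nonzero $h\in\fh$, which gives some $e_\alpha$ via $[h,e_\alpha]\neq0$ for a suitable $\alpha$. By irreducibility of $\Phi$ and brackets with the $e_{\pm\alpha_i}$, it then contains all $e_\beta$ and all $h_i$. Hence $\fh$ is a Cartan subalgebra, being maximal toral and self-centralising. The co-root of $\alpha$ is $h_\alpha$, since $h_\alpha\in[\fg_\alpha,\fg_{-\alpha}]$ and $[h_\alpha,e_\alpha]=\langle\alpha,\alpha\rangle e_\alpha=2e_\alpha$. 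Because $q_{\alpha,\beta}=0$ by Remark~\ref{remflm1}, $[e_\alpha,e_\beta]=\varepsilon(\alpha,\beta)e_{\alpha+\beta}$ is exactly ($\clubsuit$). Thus $\cL$ is a Chevalley system with sign function $\varepsilon$.
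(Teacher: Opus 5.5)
Your proposal is correct and follows the route the paper indicates (it cites Frenkel--Lepowsky--Meurman and De Graaf rather than proving it in full): you check the Jacobi identity directly on triples of root vectors, split according to whether $\alpha+\beta+\gamma$ is $0$, a root, or neither, using only (FLM1)--(FLM3) and not the normalisation $\varepsilon(\alpha,\alpha)=-1$, and then read off simplicity, the Cartan subalgebra property and ($\clubsuit$) with $q_{\alpha,\beta}=0$. The sign identities you anticipate do hold, including the degenerate subcase $\gamma=-\alpha$: (FLM1) and (FLM2) make the three coefficients in Case~1 equal, and (FLM1) together with (FLM3) for inner products $\langle\cdot,\cdot\rangle\in\{0,\pm1\}$ gives the cancellations in Case~2.
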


This result puts the original construction of Frenkel--Kac \cite[\S 
2.3]{FK} in a slightly more general, axiomatic setting. The proof is a 
completely elementary verification where the crucial thing is to check that 
$[\;,\;]\colon \fg\times \fg \rightarrow \C$ satisfies the Jacobi identity. 
This is done in \cite[Theorem~6.2.1]{FLM}; see also Kac \cite[\S 7.8]{kac}
and Springer \cite[\S 10.2]{Sp}. A very detailed exposition of the argument, 
explicitly working out the various cases that have to be considered, is 
contained in De Graaf \cite[\S 5.13]{graaf}. Note, however, that \cite{graaf},
\cite{kac}, \cite{Sp} assume that $\varepsilon(\alpha,\alpha)=-1$ for all 
$\alpha\in \Phi$ (see \cite[(6.11)]{graaf}, \cite[(7.8.2)]{kac}, 
\cite[10.2.3]{Sp}); so one has to go through their arguments and check that 
everything works assuming only that (FLM1)--(FLM3) hold. Once this is done, 
the fact that $\fg$ is simple and $\fh$ is a Cartan subalgebra follows quite 
easily. We refer to \cite{graaf}, \cite{FLM}, \cite{kac}, \cite{Sp} for 
further details.

\begin{exmp} \label{expflm1} Choose any orientation of the incidence 
graph of $A$ (see Remark~\ref{graphA}). Following Kac \cite[\S 7.8]{kac}, 
for $i,j\in I$ we define 
\[ \varepsilon(\alpha_i,\alpha_j):=\left\{\begin{array}{rl} -1 & \quad 
\begin{array}{l} \mbox{if $i=j$},\end{array}\\ -1 & \quad \begin{array}{l} 
\mbox{if $i\neq j$ are joined by an edge and} \\ \mbox{the arrow points 
from $i$ to $j$}, \end{array}\\ 1 & \quad \begin{array}{l}\mbox{otherwise}. 
\end{array} \end{array}\right.\]
Given arbitrary $\alpha,\beta\in Q$, we write $\alpha=\sum_{i\in I} n_i
\alpha_i$ and $\beta=\sum_{j\in I}m_j\alpha_j$ where $n_i,m_j\in \Z$; then 
we set $\varepsilon(\alpha,\beta):=\prod_{i,j\in I} \varepsilon(\alpha_i,
\alpha_j)^{n_i m_j}$. Thus, we have defined a bi-multiplicative function 
$\varepsilon\colon Q \times Q\rightarrow \{\pm 1\}$; in particular,
(FLM1) and (FLM2) hold by Remark~\ref{remflm2}. We also have 
\begin{align*}
\varepsilon(\alpha,\alpha)=(-1)^{\frac{1}{2}\langle \alpha,\alpha\rangle}
\qquad\mbox{for all  $\alpha\in Q$};\tag{FLM3$^*$}
\end{align*}
see \cite[Lemma~5.13.3]{graaf} for a detailed proof. Applying this to 
$\alpha+\beta$ for $\alpha,\beta \in Q$, one sees that (FLM3) holds. So we 
have a corresponding simple Lie algebra~$\fg$ as in Theorem~\ref{thmflm}.
Using (FLM3$^*$) we have $\varepsilon(\alpha,-\alpha)=\varepsilon(\alpha,
\alpha)=(-1)^{\frac{1}{2}\langle \alpha,\alpha\rangle}=-1$ and, hence, 
\begin{equation*}
[e_\alpha,e_{-\alpha}]=-h_\alpha \qquad\mbox{for all $\alpha\in \Phi$}.
\tag{$\dagger$}
\end{equation*}
This is exactly the convention for the root elements $\cL=\{e_\alpha 
\mid \alpha \in \Phi\}$ that is used in Bourbaki \cite[Ch.~VIII, \S 2, 
no.~4]{bour2}. Comparing ($\dagger$) and the formula in 
Proposition~\ref{propcan2}, we see that $\cL$ is definitely not a 
``special'' Chevalley system in the sense of Definition~\ref{defcan}.
\end{exmp}

\begin{rem} \label{rem2} 
Let $\varepsilon\colon Q\times Q\rightarrow\{\pm 1\}$ be as in 
Example~\ref{expflm1}. If, instead of ($\dagger$), one prefers to work with 
the relation $[e_\alpha,e_{-\alpha}]=h_\alpha$ for all $\alpha\in \Phi$ (which 
is the standard convention, for example, in Humphreys \cite{H}), then one can 
proceed as follows. We define a new collection $\{\hat{e}_\alpha
\mid \alpha\in \Phi\}$ of root elements by 
\[\hat{e}_\alpha:=e_\alpha \qquad \mbox{and}\qquad \hat{e}_{-\alpha} 
:=-e_{-\alpha} \qquad \mbox{for all $\alpha\in \Phi^+$}.\]
Then we certainly have $[\hat{e}_{\alpha},\hat{e}_{-\alpha}]=h_\alpha$ for all
$\alpha\in \Phi$. Furthermore, for any $\alpha\in \Phi$, we set $\mbox{sgn}
(\alpha):=1$ if $\alpha\in \Phi^+$ and $\mbox{sgn}(\alpha):=-1$ if $\alpha
\in \Phi^-$. One easily checks that
\[ [\hat{e}_{\alpha},\hat{e}_{\beta}]=\mbox{sgn}(\alpha)\mbox{sgn}(\beta)
\mbox{sgn}(\alpha+\beta)\varepsilon(\alpha,\beta)\hat{e}_{\alpha+\beta}\]
for all $\alpha,\beta\in \Phi$ such that $\alpha+\beta\in \Phi$.
\end{rem}

\begin{rem} \label{othere} There exist choices of $\varepsilon \colon 
Q\times Q\rightarrow \{\pm 1\}$ which satisfy (FLM1)--(FLM3) but which
are not related to an orientation of the graph of~$A$. See, e.g., 
Cohen--Griess--Lisser \cite[\S 2]{CGL} for such an example for~$\Phi$ 
of type~$E_8$, where we still have $\varepsilon(\alpha_i,\alpha_i)=-1$ 
for $i\in I$. (I~learned about this reference from Vavilov \cite{vav2}.)
\end{rem}

In order to establish a connection with Lusztig's canonical basis, we
will now choose a particular orientation of the incidence graph of~$A$, 
and we will slightly change the definition of $\varepsilon \colon Q 
\times Q\rightarrow \{\pm 1\}$ in Example~\ref{expflm1}. The following 
discussion is based on \cite[\S 3]{GeLa}, which itself is based on the 
Master's thesis of Lang \cite{Lang}.

\begin{exmp} \label{expflm2} We fix (one of the two possible) orientations 
of the graph of~$A$ where each vertex is either a sink or a source; let 
$\{c_i\mid i \in I\}$ be the collection of signs characterising that 
orientation. (See again Remark~\ref{graphA}.) Now we define for 
$i,j\in I$:
\[\varepsilon_0(\alpha_i,\alpha_j):=c_i^{a_{ij}}=\left\{\begin{array}{rl} 
1 & \; \begin{array}{l} \mbox{if $i=j$},\end{array}\\ -1 & \; 
\begin{array}{l} \mbox{if $i\neq j$ are joined by an edge and} \\ 
\mbox{the arrow points from $i$ to $j$}, \end{array}\\ 1 & \; 
\begin{array}{l} \mbox{otherwise}.\end{array}\end{array}\right.\]
(For the second equality note that $i\neq j$ are joined by an edge if and 
only if $a_{ij}\neq 0$; furthermore, $a_{ii}=2$.) Again, we extend this
bi-multiplicatively to all of~$Q$. Concretely, for arbitrary $\alpha,
\beta \in Q$, we write $\alpha=\sum_{i\in I} n_i \alpha_i$ and 
$\beta=\sum_{j\in I}m_j\alpha_j$ where $n_i,m_j\in \Z$; then 
\begin{equation*}
\varepsilon_0(\alpha,\beta)=\prod_{i,j\in I} c_i^{a_{ij}n_im_j}=
\prod_{i \in I} c_i^{n_i\langle \alpha_i,\beta\rangle} \qquad 
\mbox{(see \cite[Def.~3.5]{GeLa})}.\tag{$\spadesuit$}
\end{equation*}
The following result provides, at the same time, a new proof of the formula 
for the structure constants in \cite[Theorem~3.9]{GeLa}, and a new proof 
of Theorem~\ref{thmcan}, for $\fg$ with a simply laced root system~$\Phi$.
\end{exmp}

\begin{prop}  \label{propGL} The above function $\varepsilon_0 \colon Q
\times Q\rightarrow \{\pm 1\}$ satisfies {\rm (FLM1)--(FLM3)} and, hence, 
we obtain a corresponding simple Lie algebra $\fg$ as in 
Theorem~\ref{thmflm}. The collection $\cL=\{e_\alpha\mid \alpha\in 
\Phi\}$ is a special Chevalley system, where the corresponding signs (as 
in Definition~\ref{defcan}) are exactly the signs $\{c_i\}$ in 
Example~\ref{expflm2}. We have
\[ \varepsilon_0(\alpha,\alpha)=\varepsilon_0(\alpha,-\alpha)=
-(-1)^{\hgt(\alpha)} \qquad \mbox{for all $\alpha\in \Phi$}.\] 
\end{prop}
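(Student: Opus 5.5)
Since $\varepsilon_0$ is bi-multiplicative by construction, (FLM1) and (FLM2) follow at once from Remark~\ref{remflm2}. The first real step is (FLM3). By bi-multiplicativity and ($\spadesuit$),
\[
\varepsilon_0(\alpha,\beta)\varepsilon_0(\beta,\alpha)=\prod_{i,j\in I}(c_ic_j)^{a_{ij}n_im_j}.
\]
We look at the factor $(c_ic_j)^{a_{ij}}$ in three cases. If $i=j$, it equals $c_i^{4}=1$. If $i\neq j$ and $a_{ij}\neq 0$, then $a_{ij}=-1$ and $c_ic_j=-1$ by Remark~\ref{remcan1}(b), so the factor is $-1=(-1)^{a_{ij}}$. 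If $a_{ij}=0$, the factor is $1=(-1)^{a_{ij}}$.

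So for $i\neq j$ the factor is always $(-1)^{a_{ij}}$, while for $i=j$ we may also write $1=(-1)^{a_{ii}}$ since $a_{ii}=2$. Hence the product is $(-1)^{\sum a_{ij}n_im_j}=(-1)^{\langle\alpha,\beta\rangle}$, which is (FLM3). Theorem~\ref{thmflm} then gives $\fg$ with Chevalley system $\cL$ and sign function $\varepsilon_0$ (recall $q_{\alpha,\beta}=0$ by Remark~\ref{remflm1}).

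The second step is the diagonal formula. By ($\spadesuit$), $\varepsilon_0(\alpha,\alpha)=\prod_i c_i^{n_i\langle\alpha_i,\alpha\rangle}$, and bi-multiplicativity gives $\varepsilon_0(\alpha,-\alpha)=\varepsilon_0(\alpha,\alpha)$. Let $I^-:=\{i\in I\mid c_i=-1\}$; then $\varepsilon_0(\alpha,\alpha)=(-1)^{\sum_{i\in I^-}n_i\langle\alpha_i,\alpha\rangle}$. Write $\alpha=\alpha^++\alpha^-$, where $\alpha^\pm$ collects the simple roots with $c_i=\pm1$. Since no two vertices with the same sign are adjacent, $\langle\alpha^-,\alpha^-\rangle=2\sum_{i\in I^-}n_i^2$. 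Therefore the exponent is
\[
\langle\alpha^-,\alpha\rangle=\langle\alpha^-,\alpha^-\rangle+\langle\alpha^-,\alpha^+\rangle\equiv\langle\alpha^-,\alpha^+\rangle \pmod 2.
\]
On the other hand,
\[
2=\langle\alpha,\alpha\rangle=2\sum_i n_i^2+2\langle\alpha^+,\alpha^-\rangle,
\]
so $\langle\alpha^+,\alpha^-\rangle=1-\sum_i n_i^2\equiv 1-\hgt(\alpha)\pmod 2$. This gives $\varepsilon_0(\alpha,\alpha)=-(-1)^{\hgt(\alpha)}$. In particular $[e_{\alpha_i},e_{-\alpha_i}]=\varepsilon_0(\alpha_i,-\alpha_i)h_i=h_i$.

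The third step, and the only slightly delicate one, is the special property with signs $c_i$. By Remark~\ref{remflm1}, Definition~\ref{defcan}(a),(b) reduce to showing $\varepsilon_0(\alpha_i,\alpha)=c_i$ whenever $\alpha\in\Phi^+$ and $\alpha+\alpha_i\in\Phi$, and $\varepsilon_0(-\alpha_i,\alpha)=c_i$ whenever $\alpha\in\Phi^-$ and $\alpha-\alpha_i\in\Phi$. By bi-multiplicativity both signs equal $\varepsilon_0(\alpha_i,\alpha)=\prod_j c_j^{a_{ij}m_j}$, where $\alpha=\sum_j m_j\alpha_j$. For $j\neq i$ adjacent to $i$ we have $c_j=-c_i$ and $a_{ij}=-1$, so $c_j^{a_{ij}m_j}=(-c_i)^{m_j}$. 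The factor for $j=i$ is $c_i^{2m_i}=1$. Hence
\[
\varepsilon_0(\alpha_i,\alpha)=c_i^{\,s}(-1)^{s},\qquad s:=\sum_{j\neq i}a_{ij}m_j=\langle\alpha_i,\alpha\rangle-2m_i.
\]
In both cases $\langle\alpha_i,\alpha\rangle=-1$ (for $\alpha-\alpha_i\in\Phi$ we use the pair $(-\alpha_i,\alpha)$), so $s$ is odd. Therefore $\varepsilon_0(\alpha_i,\alpha)=c_i\cdot(-1)\cdot(-1)=c_i$. The main thing to get right is exactly this parity bookkeeping; no positivity of $\alpha$ is actually needed.
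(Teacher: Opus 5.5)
Your third step is wrong as written. From the definition $\varepsilon_0(\alpha_i,\alpha_j)=c_i^{a_{ij}}$ (equivalently, $(\spadesuit)$ with first argument $\alpha_i$), bi-multiplicativity gives
\[ \varepsilon_0(\alpha_i,\alpha)=\prod_j c_i^{a_{ij}m_j}=c_i^{\langle\alpha_i,\alpha\rangle}, \]
not $\prod_j c_j^{a_{ij}m_j}$. Since $a_{ij}=a_{ji}$, the product you wrote is actually $\varepsilon_0(\alpha,\alpha_i)$.

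Your evaluation of that product also does not give what you claim. It equals $(-c_i)^s$ with $s$ odd, which is $-c_i$. The final equality $c_i\cdot(-1)\cdot(-1)=c_i$ contains a spurious factor $-1$. So what you correctly computed is $\varepsilon_0(\alpha,\alpha_i)=-c_i$, which is consistent with (FLM3) because $\langle\alpha,\alpha_i\rangle$ is odd. Two errors then cancel and produce the right sign for the wrong quantity.

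The repair is immediate and is exactly the paper's argument. If $\alpha\pm\alpha_i\in\Phi$, then $\langle\alpha_i,\alpha\rangle=\mp1$ by Remark~\ref{remflm1}. Hence $\varepsilon_0(\pm\alpha_i,\alpha)=\varepsilon_0(\alpha_i,\alpha)=c_i^{\langle\alpha_i,\alpha\rangle}=c_i$, with no bookkeeping over the neighbours of $i$. Alternatively, keep your computation of $\varepsilon_0(\alpha,\alpha_i)=-c_i$ and use (FLM3) to flip the sign.

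The rest of your proposal is correct and takes a different route from the paper. The paper quotes (FLM3) from Geck--Lang. It obtains $\varepsilon_0(\alpha,\alpha)=\varepsilon_0(\alpha,-\alpha)=-(-1)^{\hgt(\alpha)}$ only at the end, as a consequence of Proposition~\ref{propcan2}, once $\cL$ is known to be special. That route rests on the $\slm_2$ and Jacobi-identity arguments behind Lemma~\ref{lem21}, plus induction on height.

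You verify (FLM3) directly via $(c_ic_j)^{a_{ij}}=(-1)^{a_{ij}}$. You then get the diagonal formula by a parity count on $\alpha=\alpha^++\alpha^-$, using that vertices of equal sign are never adjacent. This makes both facts purely lattice-theoretic and independent of any Lie algebra. It also shows the diagonal formula holds for every $\alpha\in Q$ with $\langle\alpha,\alpha\rangle=2$.
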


\begin{proof} By construction, $\varepsilon_0$ is bi-multiplicative; hence,
(FLM1) and  (FLM2) hold; see Remark~\ref{remflm2}. By 
\cite[Lemma~3.7]{GeLa} (and its proof), we know that (FLM3) also holds. 
Hence, Theorem~\ref{thmflm} applies, which yields the existence of $\fg$ 
and shows that $\cL=\{e_\alpha\mid \alpha\in \Phi\}$ is a Chevalley 
system. Now let $i\in I$. Then $[e_{\alpha_i}, e_{-\alpha_i}]=h_i$ since 
$\varepsilon_0(\alpha_i,-\alpha_i)=\varepsilon_0(\alpha_i, \alpha_i)=1$. 
Furthermore, let $\alpha\in \Phi$ be such that $\alpha\pm \alpha_i\in 
\Phi$. Then $\langle \alpha_i,\alpha\rangle=\pm 1$ by Remark~\ref{remflm1}. 
Hence, by ($\spadesuit$) and the bi-multiplicativity of~$\varepsilon_0$, we 
have 
\[\varepsilon_0(\pm \alpha_i,\alpha)=\varepsilon_0(\alpha_i,\alpha)=
c_i^{\langle\alpha_i,\alpha\rangle}=c_i\]
and so $[e_{\pm \alpha_i},e_{\alpha}]=c_ie_{\alpha\pm \alpha_i}$, as 
required. Thus, we do have a special Chevalley system $\cL=\{e_\alpha
\mid \alpha\in \Phi\}$ as in Definition~\ref{defcan}. Consequently, 
the statement concerning $\varepsilon_0(\alpha,\alpha)$ is clear by 
Proposition~\ref{propcan2}.
\end{proof}

\begin{rem} \label{vecA} Having fixed a special orientation as in 
Example~\ref{expflm2}, let $\vQ(A)$ be the set of all pairs
$(i,j)\in I \times I$ such that  
\[ i\neq j, \qquad a_{ij}\neq 0 \qquad \mbox{and} \qquad c_i=-1.\]
Thus, $\vQ(A)$ is the set of all oriented edges in the graph of $A$, 
where the orientation is expressed using the signs $\{c_i\mid i \in I\}$. 
Let us set 
\[ \vec{\rho}(\alpha,\beta):=-\sum_{(i,j)\in \vQ(A)} n_im_j
\qquad \mbox{for $\alpha,\beta\in Q$}\]
where we write, as usual, $\alpha=\sum_{i \in I} n_i\alpha_i$ and $\beta
=\sum_{j \in I} m_j\alpha_j$ with $n_i,m_j\in \Z$. Then the above function 
$\varepsilon_0\colon Q \times Q\rightarrow \{\pm 1\}$ is more concisely 
(and more efficiently from a computational point of view) given by the 
formula $\varepsilon_0(\alpha,\beta)=(-1)^{\vec{\rho}(\alpha,\beta)}$.
\end{rem}

\begin{cor} \label{vecA1} Recall that $\Phi$ is assumed to be simply laced. 
Let $\varepsilon_0\colon Q \times Q\rightarrow \{\pm 1\}$ be as in 
Example~\ref{expflm2}. Then there is a special Chevalley system $\cL=
\{e_\alpha\mid \alpha\in \Phi\}$ in the corresponding Lie algebra $\fg$ 
such that
\[ [e_\alpha,e_\beta]=(-1)^{\vec{\rho}(\alpha,\beta)}(q_{\alpha,\beta}+1)
e_{\alpha+\beta} \qquad \mbox{if $\alpha,\beta,\alpha+\beta\in \Phi$}.\]
\end{cor}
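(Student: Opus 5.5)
The corollary follows by combining Proposition~\ref{propGL} with Remark~\ref{vecA}. Apply Theorem~\ref{thmflm} with the function $\varepsilon_0$ of Example~\ref{expflm2}. This gives the Lie algebra $\fg$ and the collection $\cL=\{e_\alpha\mid\alpha\in\Phi\}$. By Proposition~\ref{propGL}, $\cL$ is a special Chevalley system with sign function $\varepsilon_0$. Moreover, by Remark~\ref{remflm1}, $q_{\alpha,\beta}=0$ whenever $\alpha,\beta,\alpha+\beta\in\Phi$. So the defining relation $[e_\alpha,e_\beta]=\varepsilon_0(\alpha,\beta)e_{\alpha+\beta}$ already has the required form $\varepsilon_0(\alpha,\beta)(q_{\alpha,\beta}+1)e_{\alpha+\beta}$. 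It remains only to show that $\varepsilon_0(\alpha,\beta)=(-1)^{\vec\rho(\alpha,\beta)}$ for all $\alpha,\beta\in Q$.

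Both sides are bi-multiplicative in $(\alpha,\beta)$. For $\varepsilon_0$ this holds by construction. For $(-1)^{\vec\rho}$ it holds because $\vec\rho$ is $\Z$-bilinear in the coefficient vectors $(n_i)$ and $(m_j)$. It therefore suffices to compare the two functions on pairs of simple roots $(\alpha_k,\alpha_l)$.

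\begin{itemize}
\item For $\alpha=\alpha_k$ and $\beta=\alpha_l$, we get $\vec\rho(\alpha_k,\alpha_l)=-1$ if $(k,l)\in\vQ(A)$ and $0$ otherwise.
\item On the other side, $\varepsilon_0(\alpha_k,\alpha_l)=c_k^{a_{kl}}$.
\item If $k=l$, then $a_{kk}=2$, so $c_k^{2}=1$; this agrees with $(-1)^0$, since $(k,k)\notin\vQ(A)$.
\item If $k\neq l$ and $a_{kl}=0$, both sides equal $1$.
\item If $k\neq l$ and $a_{kl}\neq0$, then $a_{kl}=-1$ because $\Phi$ is simply laced, so $\varepsilon_0(\alpha_k,\alpha_l)=c_k^{-1}=c_k$. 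This equals $-1$ exactly when $c_k=-1$, that is, exactly when $(k,l)\in\vQ(A)$, which matches $(-1)^{-1}=-1$.
\end{itemize}

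Hence the two functions agree on simple roots, and therefore on all of $Q\times Q$.

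There is no real obstacle in this argument. The only point needing care is the last case: the exponent $a_{kl}$ must be $-1$, and not some other odd or even value, which uses simple-lacedness. One also needs the convention that $\vQ(A)$ records oriented pairs, so that the arrow condition corresponds to $c_k=-1$ for the first index.
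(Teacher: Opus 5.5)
Your proof is correct and takes the same route as the paper. The paper's proof simply says the corollary is Proposition~\ref{propGL} rewritten using the identity $\varepsilon_0(\alpha,\beta)=(-1)^{\vec{\rho}(\alpha,\beta)}$ from Remark~\ref{vecA}; your check of that identity on pairs of simple roots (bi-multiplicativity, together with $a_{kl}=-1$ for adjacent $k\neq l$) supplies a verification that the paper states without proof.
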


\begin{proof} This is just a re-formulation of Proposition~\ref{propGL},
using the notation in Remark~\ref{vecA}.
\end{proof}

\begin{exmp} \label{expAn} Assume that $\Phi$ is of type $A_{r}$ for $r\geq 
1$. We let $I=\{1,\ldots,r\}$ where the notation is such that $i,i+1$ are 
joined by an edge for $1\leq i \leq r-1$ (as in the diagram below). We 
choose an orientation such that $i\in I$ is a source if $i$ is even, and
$i$ is a sink if $i$ is odd. Thus, $c_1=c_3=\ldots =1$ and $c_2=c_4=\ldots 
=-1$. We indicate this in the graph of $A$ as follows:
\begin{center}
\begin{picture}(200,30) 
\put(  5,15){$A_r$}
\put(  4, 4){$\scriptstyle{r \geq 1}$}
\put( 40,10){\circle*{5}}
\put( 37,16){$1^+$}
\put( 41,10){\line(1,0){28}}
\put( 70,10){\circle*{5}}
\put( 67,16){$2^-$}
\put( 70,10){\line(1,0){20}}
\put(100,10){\circle*{1}}
\put(110,10){\circle*{1}}
\put(120,10){\circle*{1}}
\put(130,10){\line(1,0){20}}
\put(150,10){\circle*{5}}
\put(138,16){$r{-}1^\pm$}
\put(150,10){\line(1,0){30}}
\put(180,10){\circle*{5}}
\put(178,16){$r^\mp$}
\end{picture}
\end{center}
So we have $\vQ(A)=\{(2,1),(2,3), (4,3),(4,5),(6,5),\ldots\}$. Now it 
is well known that 
\[ \Phi^+=\bigl\{\alpha_{ij}:=\alpha_{i}+\alpha_{i+1}+\ldots +
\alpha_{j-1} \mid 1 \leq i<j\leq r+1\bigr\}.\]
Let $1\leq i<j \leq r+1$ and $1\leq k<l\leq r+1$. The above description 
shows that $\alpha_{ij} +\alpha_{kl}\in \Phi$ if and only if either $k=j$ 
or $l=i$, in which case we have $\alpha_{ij}+\alpha_{jl}=\alpha_{ij}$ or 
$\alpha_{ij}+\alpha_{ki}=\alpha_{kj}$, respectively. A straightforward 
computation shows that 
\begin{equation*}
\vec{\rho}(\alpha_{ij},\alpha_{kl})= \left\{\begin{array}{rl}
0 & \quad \mbox{if $k=j$ is even or $l=i$ is odd},\\ -1 & \quad 
\mbox{if $k=j$ is odd or $l=i$ is even}.  \end{array}\right.\tag{a}
\end{equation*}
(Note that there is at most one pair in $\vQ(A)$ which gives a 
non-zero contribution to the sum defining $\vec{\rho}(\alpha_{ij},
\alpha_{kl})$ if $\alpha_{ij}+\alpha_{kl}\in \Phi$.)

Now (a) readily implies the following formulation, which is similar to 
that in Rylands \cite[3.1]{Ry}. If $\alpha,\beta\in\Phi^+$ are such that
$\alpha+\beta\in \Phi$, then 
\begin{gather*}
\vec{\rho}(\alpha,\beta)-\vec{\rho} (\beta,\alpha)\in \{\pm 1\} \quad
\mbox{and}\qquad\tag{b}\\
\varepsilon_0(\alpha,\beta)=\left\{\begin{array}{rl} 1 & \quad
\mbox{if $\vec{\rho}(\alpha,\beta)>\vec{\rho}(\beta,\alpha)$},\\ -1 & 
\quad \mbox{if $\vec{\rho}(\alpha,\beta)<\vec{\rho}(\beta,\alpha)$}. 
\end{array}\right.\tag{c}
\end{gather*}
This will turn out to be useful in the proof of Proposition~\ref{prop41} 
below.
\end{exmp}

\begin{rem} \label{remL2} We may also define $\varepsilon \colon Q\times Q
\rightarrow \{\pm 1\}$ in Example~\ref{expflm1} using one of the two 
special orientations of the graph of $A$ as in Example~\ref{expflm2}. 
Since the only difference between $\varepsilon$ and $\varepsilon_0$ occurs
in the definition of the values on pairs of basis elements $(\alpha_i,
\alpha_i)$ for $i\in I$, one immediately sees that 
\[ \varepsilon(\alpha,\beta)=\varepsilon_0(\alpha,\beta)\prod_{i\in I}
(-1)^{n_im_i}\]
where we write $\alpha=\sum_{i \in I} n_i\alpha_i$ and $\beta=
\sum_{i \in I} m_i\alpha_i$ with $n_i,m_i\in \Z$. 
\end{rem}

\section{Non simply laced root systems} \label{secnon}

Let $\Phi\subseteq E$ and $\langle \;,\;\rangle\colon E\times E\rightarrow 
\R$ be as before, but let us now drop the assumption that $\Phi$ be simply 
laced. Thus, $\Phi$ may also be of type $B_r$, $C_r$ ($r\geq 2$), $G_2$ or 
$F_4$. Let $\fg$ be a simple Lie algebra over $\C$ with root system $\Phi$. 
It is well-known (see, e.g., De Graaf \cite[\S 5.15]{graaf} or Kac 
\cite[\S 7.9]{kac}) that $\fg$ can be constructed using a ``folding'' 
procedure from a suitable simple Lie algebra $\fg^\circ$ with a simply 
laced root system. (We will have $\fg^\circ=\fg$ if $\Phi$ itself is 
simply laced.) We do not need to go into the exact details of the 
construction; let us just summarise the main points that will be important
to us here.

First we define an integer $e\geq 1$ as follows. We set $e=1$ if $\Phi$ is 
simply laced. If $\Phi$ is of type $B_r$, $C_r$ or $F_4$, then $e=2$; if
$\Phi$ is of type~$G_2$, then $e=3$. Now, given $\Phi$ and $\{\alpha_i
\mid i \in I\}$, there exists a pair $(\Phi^\circ,\tau)$, where 
$\Phi^\circ$ is a simply laced root system and $\tau\colon \Phi^\circ 
\rightarrow \Phi^\circ$ is a permutation of order $e\geq 1$, such that 
the following conditions hold.
\begin{itemize}
\item[(F1)] There is a linear map $\eta\colon \Z\Phi^\circ \rightarrow 
\Z\Phi$ such that $\eta(\Phi^\circ)=\Phi$. 
\item[(F2)] The fibres of $\eta|_{\Phi^\circ}\colon \Phi^\circ \rightarrow 
\Phi$ are exactly the orbits of $\tau$ on $\Phi^\circ$.
\item[(F3)] $\Pi^\circ:=\eta^{-1}(\{\alpha_i \mid i \in I\})$ is
a system of simple roots for $\Phi^\circ$.
\end{itemize}
This easily follows from the consideration of the various cases; see, 
e.g., \cite[\S 13.3]{C1}, \cite[Table~2]{GeLa}. See also Lusztig 
\cite[14.1.4--14.1.6]{L10} where ``folding'' is treated in a considerably 
more general setting. 

Now consider a simple Lie algebra $\fg^\circ$ with root system $\Phi^\circ$.
We assume that $\fg^\circ$ is constructed as in Proposition~\ref{propGL},
using a function $\varepsilon_0^\circ \colon\Z\Phi^\circ\times\Z\Phi^\circ
\rightarrow \{\pm 1\}$ as in Example~\ref{expflm2}. Such a function 
satisfies a further condition, as follows. Let $\alpha,\beta\in \Phi$ be 
such that $\alpha+\beta \in \Phi$. Let $S(\alpha,\beta)$ be the set of 
all $\alpha^\circ, \beta^\circ\in \Phi^\circ$ such that 
$\eta(\alpha^\circ)=\alpha$, $\eta(\beta^\circ)=\beta$ and $\alpha^\circ+
\beta^\circ\in\Phi^\circ$. Then, by \cite[Lemma~4.9]{GeLa}, we have:
\begin{itemize}
\item[(F4)] Invariance: $\quad S(\alpha,\beta)\neq \varnothing$ and 
$\varepsilon^\circ$ is constant on $S(\alpha,\beta)$.
\end{itemize}
The above function $\varepsilon_0^\circ$ is defined by an orientation of 
the graph of the Cartan matrix of $\Phi^\circ$ for which every vertex is 
either a sink or a source. Since we did not introduce a notation for an 
index set of $\Pi^\circ$ as in (F3), we index the corresponding signs 
directly by $\Pi^\circ$ and denote them as 
\[ \{c_{\alpha^\circ}\mid\alpha^\circ \in\Pi^\circ\}.\]  
Now $\tau\colon \Phi^\circ\rightarrow\Phi^\circ$ induces a Lie algebra 
automorphism $\tilde{\tau}\colon\fg^\circ\rightarrow\fg^\circ$ such that 
$\fg=\{x\in \fg^\circ\mid \tilde{\tau}(x)=x\}$ is a simple Lie 
algebra with root system~$\Phi$; see \cite[\S 5.15]{graaf} or 
\cite[\S 7.9]{kac} for details. Futhermore, we have:

\begin{thm}[See \protect{\cite[Theorem~4.10]{GeLa}}] \label{thmF1}
In the above setting, there is a special Chevalley system $\cL=\{e_\alpha 
\mid \alpha\in \Phi\}$ for $\fg$, with the following properties. 
\begin{itemize}
\item[(a)] The corresponding signs $\{c_i\mid i \in I\}$ (as in 
Definition~\ref{defcan}) are given by $c_i= c_{\alpha^\circ}$ where 
$\alpha^\circ\in \Pi^\circ$ is such that $\eta(\alpha^\circ)=\alpha_i$.
\item[(b)] Let $\alpha, \beta \in \Phi$ be such that $\alpha+\beta 
\in \Phi$ and write 
\[\qquad [e_\alpha,e_\beta]=\varepsilon(\alpha,\beta)(q_{\alpha,\beta}+1) 
e_{\alpha+\beta}\quad\mbox{where $\varepsilon(\alpha,\beta)=\pm 1$}.\]
Then we have $\varepsilon(\alpha,\beta)=\varepsilon_0^\circ(\alpha^\circ,
\beta^\circ)$ where $(\alpha^\circ,\beta^\circ)\in S(\alpha,\beta)$.
\end{itemize}
\end{thm}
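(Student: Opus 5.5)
The natural route is to build the root elements of $\fg$ as orbit sums of the special Chevalley system $\cL^\circ=\{e^\circ_{\gamma}\mid\gamma\in\Phi^\circ\}$ of $\fg^\circ$ from Proposition~\ref{propGL}. For $\alpha\in\Phi$ put
\[ e_\alpha:=\sum_{\gamma\in\eta^{-1}(\alpha)\cap\Phi^\circ} e^\circ_\gamma .\]
First I will check that $\tilde\tau(e^\circ_\gamma)=e^\circ_{\tau(\gamma)}$, with no signs. By construction $\tilde\tau$ permutes the simple root elements $e^\circ_{\pm\alpha^\circ}$, and $\tau$ preserves the signs $c_{\alpha^\circ}$, since (F2)--(F3) make $\tau$ a graph automorphism and each $\tau$-orbit on $\Pi^\circ$ maps to a single $\alpha_i$. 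Hence $\tilde\tau(\cL^\circ)$ is again a special Chevalley system with the same signs. The uniqueness argument in the proof of Proposition~\ref{propcan1}(a), via iterated brackets with the coefficients $m^\pm_\alpha$, then forces $\tilde\tau(e^\circ_\gamma)=e^\circ_{\tau\gamma}$. Consequently each $e_\alpha$ is $\tilde\tau$-fixed, lies in $\fg$, and is nonzero; it also has weight $\alpha$ for the Cartan subalgebra $\fh^{\tilde\tau}$, so $e_\alpha\in\fg_\alpha$.

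For (a), take $\alpha_i$ with orbit $O_i=\eta^{-1}(\alpha_i)$. Elements of $O_i$ are pairwise orthogonal, since a folding orbit of simple roots contains no adjacent vertices. Hence
\[ [e_{\alpha_i},e_{-\alpha_i}]=\sum_{\gamma\in O_i}h^\circ_\gamma ,\]
and I will identify this with the coroot $h_i$ of $\fg$ by checking that it acts by $2$ on $e_{\alpha_i}$. Next take $\alpha\in\Phi^+$ with $\alpha+\alpha_i\in\Phi$. In $[e_{\alpha_i},e_\alpha]$, expand into the terms $[e^\circ_{\gamma},e^\circ_{\delta}]$ with $\gamma\in O_i$ and $\delta\in\eta^{-1}(\alpha)$. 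Each nonzero term equals $c_{\gamma}e^\circ_{\gamma+\delta}$ by Proposition~\ref{propGL}, and $c_\gamma=c_i$ is constant on $O_i$. So $[e_{\alpha_i},e_\alpha]=c_i\sum_{\mu}N_\mu e^\circ_\mu$, where $N_\mu$ counts the pairs $(\gamma,\delta)$ with $\gamma+\delta=\mu$ and $\mu$ runs over $\eta^{-1}(\alpha+\alpha_i)$. By $\tau$-equivariance $N_\mu$ is constant on the orbit, so the bracket equals $c_iN\,e_{\alpha+\alpha_i}$. It remains to show $N=q_{\alpha_i,\alpha}+1$, and the same argument handles (b) of Definition~\ref{defcan}.

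For Theorem~\ref{thmF1}(b) the same expansion works for arbitrary $\alpha,\beta$. Every nonzero term $[e^\circ_{\alpha^\circ},e^\circ_{\beta^\circ}]$ has $(\alpha^\circ,\beta^\circ)\in S(\alpha,\beta)$ and coefficient $\varepsilon_0^\circ(\alpha^\circ,\beta^\circ)$, because $q=0$ in the simply laced case. By (F4) this coefficient is one constant sign. Therefore
\[ [e_\alpha,e_\beta]=\varepsilon_0^\circ(\alpha^\circ,\beta^\circ)\,N_{\alpha,\beta}\,e_{\alpha+\beta} ,\]
where $N_{\alpha,\beta}$ is the number of pairs in $S(\alpha,\beta)$ lying over a fixed $\mu$. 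Once we know $N_{\alpha,\beta}=q_{\alpha,\beta}+1$, this gives ($\clubsuit$) with the stated sign, and it simultaneously gives the special relations.

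The main obstacle is the counting identity
\[ |\{(\alpha^\circ,\beta^\circ)\in S(\alpha,\beta)\mid\alpha^\circ+\beta^\circ=\mu\}|=q_{\alpha,\beta}+1 .\]
I would avoid case-checking as follows. The bracket $[e_\alpha,e_\beta]$ is some integer $N'\ne0$ times $e_{\alpha+\beta}$, and the sign is already known to be $\varepsilon^\circ_0$. Since $\fg$ is simple with root elements $e_\alpha$, and since the elements $[e_\alpha,e_{-\alpha}]$ are multiples $\zeta_\alpha h_\alpha$, I would use the Jacobi/$\slm_2$ computation from Lemma~\ref{lem21} to compare $|N'|$ with $q+1$. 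Failing that, I would compute $N$ directly from orbit sizes: $N=|\eta^{-1}(\alpha)|\cdot|\{\text{partners}\}|/|\eta^{-1}(\alpha+\beta)|$, using that orbits have size $1$ or $e$ and that $q_{\alpha,\beta}+1\in\{1,2,3\}$ matches the ratio of squared root lengths.
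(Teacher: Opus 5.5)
\textbf{There is a genuine gap: the identity $N=q_{\alpha,\beta}+1$ is never proved.} Your set-up is sound. The orbit sums $e_\alpha=\sum_{\gamma\in\eta^{-1}(\alpha)}e^\circ_\gamma$ lie in $\fg$, and expanding gives $[e_\alpha,e_\beta]=\varepsilon_0^\circ(\alpha^\circ,\beta^\circ)\,N\,e_{\alpha+\beta}$. Here $N\geq 1$ counts the pairs of $S(\alpha,\beta)$ lying over a fixed $\mu\in\eta^{-1}(\alpha+\beta)$, and the sign is constant by (F4). (A small repair: $\tau$ being a graph automorphism does not by itself preserve the signs $c_{\alpha^\circ}$, since a graph automorphism can swap sinks and sources. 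It does preserve them here, because two adjacent simple roots in one orbit would give $2\alpha_i=\eta(\alpha^\circ+\beta^\circ)\in\Phi$, so $\tau$ fixes a vertex of the tree.)

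Everything in the theorem beyond the sign rests on $N=q_{\alpha,\beta}+1$. That includes ($\clubsuit$) itself and the coefficients $q_{\alpha_i,\alpha}+1$, $p_{\alpha_i,\alpha}+1$ in Definition~\ref{defcan}. Neither of your suggestions establishes it as stated.
\begin{itemize}
\item The $\slm_2$ computation of Lemma~\ref{lem21} only gives $N_{\alpha,\beta}N_{-\alpha,\alpha+\beta}=\zeta_\alpha\,p_{\alpha,\beta}(q_{\alpha,\beta}+1)$, which does not separate the two factors. For $\alpha=\alpha_1$ and $\beta\in\{\alpha_2,\alpha_1+\alpha_2\}$ in type $C_2$ the right side is $2$ in both cases, while the correct $N$ is $1$ resp.\ $2$.
\item The orbit-size count rests on a false premise: $q_{\alpha,\beta}+1$ is not a ratio of squared root lengths. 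In $G_2$ take $\alpha=\alpha_2$, $\beta=\alpha_1+\alpha_2$. All three roots are short and all fibres have size $3$, yet $q_{\alpha,\beta}+1=2$, because each $\gamma\in\eta^{-1}(\alpha)$ has two partners. The correct identity is $(q_{\alpha,\beta}+1)/p_{\alpha,\beta}=\langle\alpha+\beta,\alpha+\beta\rangle/\langle\beta,\beta\rangle$.
\end{itemize}

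\textbf{How to close it.} The classical Chevalley argument works inside your framework, using two facts your counting formula gives for free.
\begin{itemize}
\item Since $S(-\alpha,-\beta)=-S(\alpha,\beta)$ and $\varepsilon_0^\circ$ is bimultiplicative, $N_{-\alpha,-\beta}=N_{\alpha,\beta}$.
\item Distinct $\gamma,\gamma'$ in one fibre satisfy $\gamma\pm\gamma'\notin\Phi^\circ$, since the images $0$ and $2\alpha$ are not roots. So they are orthogonal, and $[e_\alpha,e_{-\alpha}]=\sum_\gamma[e^\circ_\gamma,e^\circ_{-\gamma}]=-(-1)^{\hgt(\alpha)}h_\alpha$ by Proposition~\ref{propGL}. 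Hence $\zeta_\alpha=-(-1)^{\hgt(\alpha)}$ and $\zeta_\alpha\zeta_\beta\zeta_{\alpha+\beta}=-1$.
\end{itemize}
Apply the Jacobi relation from the proof of Lemma~\ref{cass1} to $(-\alpha,\alpha+\beta,-\beta)$. This gives $N_{-\alpha,\alpha+\beta}=-\zeta_\beta\zeta_{\alpha+\beta}\frac{\langle\beta,\beta\rangle}{\langle\alpha+\beta,\alpha+\beta\rangle}N_{\alpha,\beta}$. Substituting into the product relation yields $N_{\alpha,\beta}^2=p_{\alpha,\beta}(q_{\alpha,\beta}+1)\frac{\langle\alpha+\beta,\alpha+\beta\rangle}{\langle\beta,\beta\rangle}=(q_{\alpha,\beta}+1)^2$.

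\textbf{Comparison with the paper.} With this step supplied, your route is a genuine alternative to the paper's. The paper does no computation here: it imports \cite[Theorem~4.10]{GeLa}, which is proved via Lusztig's canonical basis of $\fg^\circ$, and then translates through ($*$) in the proof of Theorem~\ref{thmcan}. Your argument would stay entirely inside the Frenkel--Kac model. It would thereby also reprove Theorem~\ref{thmcan} for non-simply-laced $\fg$.
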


\begin{proof} In \cite{GeLa} this is proved using Lusztig's canonical 
basis for $\fg^\circ$. So one just needs to apply the translation from 
that basis to a special Chevalley system already employed in the proof 
of Theorem~\ref{thmcan}.
\end{proof}

In particular, the above result yields formulae for the structure constants
of~$\fg$ in terms of the structure constants of~$\fg^\circ$. It would also 
be desirable to find explicit formulae for the structure constants of~$\fg$ 
directly in terms of the roots in $\Phi$, without reference to $\Phi^\circ$. 
This is what we will do in this and the following section. 

The analogous problem was also considered by Ringel \cite{Ri} and, more 
recently, by Rylands \cite{Ry}, but for $\fg^\circ$ defined with respect to 
functions $\varepsilon^\circ\colon\Z\Phi^\circ\times\Z\Phi^\circ\rightarrow
\{\pm 1\}$ as in Example~\ref{expflm1} (which do not lead to special 
Chevalley systems). Nevertheless, we can adapt certain definitions and 
arguments from \cite{Ri}, \cite{Ry} to the present context, most notably 
the following one.

\begin{defn}[Cf.\ Ringel \protect{\cite[p.~139]{Ri}}] \label{defR}
Let $\vQ(A)\subseteq I \times I$ be defined as in Remark~\ref{vecA}.
Furthermore, we set $d_i:=1$ if $\alpha_i$ is a short root and $d_i:=e$ if
$\alpha_i$ is a long root. Thus, we have 
\[ d_ia_{ij}=a_{ji}d_j\qquad \mbox{for all $i,j\in I$}.\]
Let $\alpha,\beta \in Q=\Z\Phi$ and write $\alpha=\sum_{i \in I} n_i
\alpha_i$ and $\beta=\sum_{j \in I} m_j\alpha_j$ where $n_i,m_j\in \Z$. 
Then we define 
\[ \vec{\rho}(\alpha,\beta):=\sum_{(i,j)\in\vQ(A)} d_ia_{ij} 
n_im_j\quad\in\Z.\]
Note that, if $\Phi$ is simply laced, then $d_i=1$ and $a_{ij}=-1$
for all $(i,j)\in \vQ(A)$. Hence, in this case, the above definition 
of $\vec{\rho} (\alpha,\beta)$ reduces to that in Remark~\ref{vecA}.
\end{defn}

Note that the above definition of $\vec{\rho}(\alpha,\beta)$ is very 
similar to, but not exactly the same as that of $(\alpha,\beta)_\Omega$ 
in Ringel \cite[p.~139]{Ri}, since our $\vQ(A)$ only contains pairs 
$(i,j) \in I\times I$ with $i\neq j$. 

\begin{rem} \label{lemRi} 
The function $\vec{\rho}\colon Q\times Q \rightarrow \Z$ has the 
following symmetry property. Let $\vQp(A)$ and $\vec{\rho'}(\alpha,
\beta)$ be defined with respect to the collection of signs $\{-c_i\mid i 
\in I\}$ (see Remark~\ref{remcan1}). Then we certainly have 
$\vQp(A)=\{(j,i)\mid (i,j)\in \vQ(A)\}$ and so 
\[\vec{\rho'}(\alpha,\beta)=\sum_{(j,i)\in \vQ'(A)} (d_ja_{ji})n_jm_i=
\sum_{(i,j)\in \vQ(A)} (d_ia_{ij})m_in_j=\vec{\rho}(\beta,\alpha)\]
for all $\alpha,\beta \in Q$.
\end{rem}

\begin{exmp} \label{expG2} Let $\Phi$ be of type $G_2$. Let $I=\{1,2\}$ 
where the notation is such that $\alpha_1$ is long and $\alpha_2$ is short. 
Then $\vQ(A)=\{(2,1)\}$ if $c_1=1$ and $\vQ(A)=\{(1,2)\}$ if 
$c_1=-1$. Hence, if $\alpha=n_1\alpha_1+n_2\alpha_2\in Q$ and $\beta=m_1
\alpha_1+m_2\alpha_2\in Q$, then 
\[\vec{\rho}(\alpha,\beta)=\left\{\begin{array}{cl} -3n_2m_1 & \quad
\mbox{if $c_1=1$, $c_2=-1$},\\ -3n_1m_2 & \quad \mbox{if $c_1=-1$, 
$c_2=1$}. \end{array} \right.\]
(Note that $d_1=3$, $d_2=1$, $a_{12}=-1$ and $a_{21}=-3$.) Now consider
\[ \Phi^+=\{\alpha_1,\alpha_2,\alpha_1+\alpha_2,\alpha_1+2\alpha_2,
\alpha_1+3\alpha_2,2\alpha_1+3\alpha_2\}.\]
Let $\alpha,\beta\in \Phi$ be arbitrary such that $\alpha+\beta\in\Phi$. 
Then we claim that 
\begin{equation*}
[e_\alpha,e_\beta]=(-1)^{\vec{\rho}(\alpha,\beta)}(q_{\alpha,\beta}+1)
e_{\alpha+\beta}.\tag{a}
\end{equation*}
This can be checked by an explicit verification. Let us just consider
the case where $\alpha,\beta\in \Phi^+$ and neither $\alpha$ nor $\beta$
belongs to $\{\alpha_1,\alpha_2\}$. This only occurs if
$\{\alpha,\beta\}=\{\alpha_1+\alpha_2,\alpha_1+2\alpha_2\}$.
Now, using $e_{\alpha_1+\alpha_2}=c_1[e_{\alpha_1},c_{\alpha_2}]$ and the 
Jacobi identity, we obtain 
\begin{align*}
[e_{\alpha_1+\alpha_2},e_{\alpha_1+2\alpha_2}]& =c_1[[e_{\alpha_1},
e_{\alpha_2}],e_{\alpha_1+2\alpha_2}]=\ldots =3c_2e_{2\alpha_1+3\alpha_2}.
\end{align*}
On the other hand, we also find that 
\[ \vec{\rho}(\alpha_1+\alpha_2,\alpha_1+2\alpha_2)= \left\{
\begin{array}{cl} -3 & \quad \mbox{if $c_1=1$, $c_2=-1$},\\ -6 &\quad
\mbox{if $c_1=-1$, $c_2=1$}. \end{array}\right.\]
Hence, the desired identity holds for $\alpha=\alpha_1+\alpha_2$ and
$\beta=\alpha_1+2\alpha_2$. The argument is analogous for 
$\alpha=\alpha_1+2\alpha_2$ and $\beta=\alpha_1+\alpha_2$. Finally, 
if $\alpha\in \Phi^+$ or $\beta\in \Phi^+$ belongs to $\{\alpha_1, 
\alpha_2\}$, then the verification is much easier and will be omitted.
The cases where at least one of $\alpha,\beta$ is a negative root is
handled by Remark~\ref{allsigns}.
\end{exmp}

\begin{table}[htbp] \caption{Structure constants for $F_4$ (see 
Example~\ref{expF4})} \label{Mf4} 
\begin{center} {\footnotesize
$\begin{array}{ccrrr} \hline \alpha &\beta & \vec{\rho}_{\alpha\beta}
& \vec{\rho}_{\beta\alpha} &N_{\alpha,\beta}\\\hline
    1000&  0100&    -2&     0&    -1 \\
    1000&  0110&    -2&     0&    -1 \\
    1000&  0120&    -2&     0&    -1 \\
    1000&  0111&    -2&     0&    -1 \\
    1000&  0121&    -2&     0&    -1 \\
    1000&  0122&    -2&     0&    -1 \\
    1000&  1342&    -6&     0&    -1 \\
    0100&  0010&     0&    -2&     1 \\
    0100&  0011&     0&    -2&     1 \\
    0100&  1120&     0&    -6&     1 \\
    0100&  1121&     0&    -6&     1 \\
    0100&  1122&     0&    -6&     1 \\
    0100&  1242&     0&   -10&     1 \\
    0010&  0001&    -1&     0&    -1 \\
    0010&  1100&    -2&     0&    -1 \\
    0010&  0110&    -2&     0&    -2 \\
    0010&  1110&    -2&     0&    -2 \\
    0010&  0111&    -3&     0&    -1 \\
    0010&  1111&    -3&     0&    -1 \\
    0010&  1221&    -5&     0&    -1 \\
    0010&  1222&    -6&     0&    -1 \\
    0010&  1232&    -6&     0&    -2 \\
    0001&  0110&     0&    -1&     1 \\
    0001&  1110&     0&    -1&     1 \\
    0001&  0120&     0&    -2&     1 \\
    0001&  1120&     0&    -2&     1 \\
    0001&  0121&     0&    -2&     2 \\
    0001&  1220&     0&    -2&     1 \\
    0001&  1121&     0&    -2&     2 \\
    0001&  1221&     0&    -2&     2 \\
    0001&  1231&     0&    -3&     1 \\
    1100&  0011&     0&    -2&     1 \\
    1100&  0120&    -2&    -4&    -1 \\
    1100&  0121&    -2&    -4&    -1 \\ \hline
\end{array} \qquad \qquad \begin{array}{ccrrr} \hline \alpha &\beta & 
\vec{\rho}_{\alpha\beta} & \vec{\rho}_{\beta\alpha} &N_{\alpha,\beta}\\
\hline
    1100&  0122&    -2&    -4&    -1 \\
    1100&  1242&    -4&   -10&     1 \\
    0110&  0011&    -1&    -2&     1 \\
    0110&  1110&    -2&    -4&    -2 \\
    0110&  1111&    -3&    -4&    -1 \\
    0110&  1121&    -3&    -6&     1 \\
    0110&  1122&    -4&    -6&     1 \\
    0110&  1232&    -6&    -8&    -2 \\
    0011&  1110&    -2&    -1&    -1 \\
    0011&  0111&    -3&    -1&    -2 \\
    0011&  1111&    -3&    -1&    -2 \\
    0011&  1220&    -4&    -2&     1 \\
    0011&  1221&    -5&    -2&     1 \\
    0011&  1231&    -5&    -3&     2 \\
    1110&  0111&    -5&    -2&     1 \\
    1110&  0121&    -5&    -4&    -1 \\
    1110&  0122&    -6&    -4&    -1 \\
    1110&  1232&   -10&    -8&    -2 \\
    0120&  1111&    -6&    -4&    -1 \\
    0120&  1122&    -8&    -6&     1 \\
    0120&  1222&   -12&    -6&     1 \\
    0111&  1120&    -2&    -8&    -1 \\
    0111&  1111&    -3&    -5&    -2 \\
    0111&  1121&    -3&    -8&    -1 \\
    0111&  1231&    -5&   -11&     2 \\
    1120&  0122&   -10&    -4&    -1 \\
    1120&  1222&   -16&    -6&     1 \\
    1111&  0121&    -5&    -6&     1 \\
    1111&  1231&    -9&   -11&     2 \\
    0121&  1121&    -6&    -8&    -2 \\
    0121&  1221&   -10&    -8&    -2 \\
    1220&  0122&   -10&    -8&    -1 \\
    1220&  1122&   -10&   -12&    -1 \\
    1121&  1221&   -14&    -8&    -2 \\ \hline
\end{array}$}
\end{center}
\text{\scriptsize (Here, for example, $0121$ stands for the root 
$\alpha_2{+}2\alpha_3 {+}\alpha_4$.)}
\end{table}

\begin{exmp} \label{expF4} Let $\Phi$ be of type $F_4$. Let $I=\{1,2,3,4\}$ 
where the notation is such that $\alpha_1$, $\alpha_2$ are long, $\alpha_3,
\alpha_4$ are short and $\alpha_2,\alpha_3$ are connected in the graph 
of $A$. We assume that $c_1=c_3=-1$ and $c_2=c_4=1$, indicated in 
the graph of $A$ as follows:
\begin{center}
\begin{picture}(150,30) 
\put(  5, 9){$F_4$}
\put( 40,10){\circle*{6}}
\put( 37,16){$1^-$}
\put( 41,10){\line(1,0){28}}
\put( 70,10){\circle*{6}}
\put( 67,16){$2^+$}
\put( 68,12){\line(1,0){32}}
\put( 68, 8){\line(1,0){32}}
\put( 80,7.3){$>$}
\put(100,10){\circle*{6}}
\put(98,16){$3^-$}
\put(100,10){\line(1,0){30}}
\put(130,10){\circle*{6}}
\put(128,16){$4^+$}
\end{picture}
\end{center}
Then $\vQ(A)=\{(1,2),(3,2),(3,4)\}$. Hence, if $\alpha=\sum_{1\leq i
\leq 4}n_i\alpha_i\in Q$ and $\beta=\sum_{1\leq j \leq 4} m_j\alpha_j\in Q$,
then
\[\vec{\rho}(\alpha,\beta)=-2n_1m_2-2n_3m_2-n_3m_4.\]
(Note that $d_1=d_2=2$, $d_3=d_4=1$, $a_{32}=-2$ and $a_{34}=-1$.) In 
Table~\ref{Mf4} we list the values $\vec{\rho}_{\alpha\beta}=\vec{\rho}
(\alpha,\beta)$ for $\alpha,\beta\in \Phi^+$ such that $\alpha+\beta
\in \Phi$. The last column contains the actual structure constants
$N_{\alpha,\beta}=\varepsilon(\alpha,\beta)(q_{\alpha,\beta}+1)$ of~$\fg$.
With some effort, these could be computed ``by hand''; or one can use the  
computer programs in \cite{mygreen}, \cite{chevlie}. Note that, in the 
framework of \cite[\S 4]{mygreen} or \cite[\S 2]{chevlie}, there is an 
algorithm~---~independent of the considerations in this paper~---~for 
computing the structure constants of any~$\fg$ with respect to Lusztig's 
canonical basis and, hence, also with respect to~$\cL$ (via ($*$) in the
proof of Theorem~\ref{thmcan}). We can actually extract an abstract 
formula from Table~\ref{Mf4}. For any integer $n \in \Z$ we define 
\[ \delta_4(n):=\left\{\begin{array}{ll} (-1)^{n/2} & \mbox{ if $n$ is
even},\\ (-1)^{(n+1)/2} & \mbox{ if $n$ is odd}. \end{array}\right.\]
In particular, if $n$ is even, then $\delta_4(n)=(-1)^n$. Then one can check 
(``by hand'' using Table~\ref{Mf4}, or with the programs in \cite{mygreen},
\cite{chevlie}) that the following formula holds for all $\alpha,\beta 
\in \Phi$ such that $\alpha+\beta \in \Phi$:
\begin{equation*}
\varepsilon(\alpha,\beta)=\left\{\begin{array}{rl} \delta_4(\vec{\rho}
(\alpha,\beta)) & \quad\mbox{if $\vec{\rho}(\alpha,\beta)$ is even},\\ 
-\delta_4(\vec{\rho}(\beta,\alpha)) & \quad \mbox{if $\vec{\rho}(\alpha,
\beta)$ is odd};\end{array}\right.\tag{a}
\end{equation*}
furthermore, this formula remains valid if we replace each $c_i$ by $-c_i$.
\end{exmp}

In the above two examples there is a definite relation between the
signs $\varepsilon(\alpha,\beta)$ and the function $\vec{\rho}\colon Q
\times Q\rightarrow \Z$. However, by inspection of Table~\ref{Mf4} we 
see that the signs $\varepsilon(\alpha,\beta)$ are not determined by just 
looking at $\vec{\rho}(\alpha,\beta)$ alone~---~contrary to the situation 
in the simply laced case (see Corollary~\ref{vecA1}). Instead, exactly as
in Ringel \cite[p.~139]{Ri}, it seems to be necessary to take into account 
both $\vec{\rho}(\alpha,\beta)$ and $\vec{\rho}(\beta,\alpha)$, and also 
not just their values modulo~$2$. Inspired by the formulae of Rylands 
\cite[\S 3]{Ry}, and using extensive experiments with the programs in 
\cite{mygreen}, \cite{chevlie}, we are lead to the formulation of the 
following result.

\begin{prop} \label{prop41} Let $\fg$ be of type $B_r$ or~$C_r$, where 
$r\geq 2$. Let $\cL=\{e_\alpha\mid\alpha\in \Phi\}$ be a special Chevalley 
system. Then the signs $\varepsilon(\alpha,\beta)$ in the corresponding
structure constants are given as follows, where $\alpha,\beta\in \Phi^+$ 
are such that $\alpha+\beta\in\Phi$. 
\begin{itemize}
\item[(a)] If $\fg$ is of type $B_r$, then $\vec{\rho}(\alpha,\beta)\in 
2\Z$ and $\varepsilon(\alpha,\beta)=(-1)^{\vec{\rho}(\alpha,\beta)/2}$.
\item[(b)] If $\fg$ is of type $C_r$, then $\vec{\rho}(\alpha,\beta)-
\vec{\rho}(\beta,\alpha)\in \{\pm 1,\pm 2,\pm 3\}$ and 
\[ \varepsilon(\alpha,\beta)=\left\{\begin{array}{rl} 1 & \quad
\mbox{if $\vec{\rho}(\alpha,\beta)>\vec{\rho}(\beta,\alpha)$},\\ 
-1 & \quad \mbox{if $\vec{\rho}(\alpha,\beta)<\vec{\rho}(\beta,\alpha)$}.
\end{array}\right.\]
\end{itemize}
\end{prop}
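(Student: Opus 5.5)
We will use Theorem~\ref{thmF1}, realising $\fg$ of type $B_r$ as the fixed points of $\fg^\circ$ of type $A_{2r-1}$, and $\fg$ of type $C_r$ as the fixed points of $\fg^\circ$ of type $D_{r+1}$, each under the graph automorphism $\tau$ of order~$2$. Then $\varepsilon(\alpha,\beta)=\varepsilon_0^\circ(\alpha^\circ,\beta^\circ)=(-1)^{\vec{\rho}^\circ(\alpha^\circ,\beta^\circ)}$ for any $(\alpha^\circ,\beta^\circ)\in S(\alpha,\beta)$, by Corollary~\ref{vecA1}. The orientation on $\Phi^\circ$ is the $\tau$-invariant sink/source orientation compatible with $\{c_i\}$ via Theorem~\ref{thmF1}(a).

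\textbf{Reduction.} By Remark~\ref{lemRi}, replacing every $c_i$ by $-c_i$ swaps $\vec{\rho}(\alpha,\beta)$ and $\vec{\rho}(\beta,\alpha)$. By Remark~\ref{remcan1}(a) it also changes the sign of $\varepsilon$. Both formulas are compatible with this change: in (b) this is clear, and in (a) it requires $\vec{\rho}(\alpha,\beta)/2\equiv \vec{\rho}(\beta,\alpha)/2+1 \pmod 2$. It therefore suffices to treat one orientation. We fix the labelling of the Dynkin diagram with $\alpha_r$ the special node and $c_1=1$.

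\textbf{Explicit description.} We write positive roots explicitly:
\begin{itemize}
\item[] in type $B_r$: $e_i\pm e_j$ and $e_i$;
\item[] in type $C_r$: $e_i\pm e_j$ and $2e_i$.
\end{itemize}
Each is expressed as a sum of consecutive simple roots, with coefficient $2$ on a tail where applicable. We list all pairs $\alpha,\beta\in\Phi^+$ with $\alpha+\beta\in\Phi$ as a handful of families indexed by integers $i<j<k$, for example $(e_i-e_j,\,e_j\pm e_k)$ and $(e_i-e_j,\,e_j)$.

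For each family we compute $\vec{\rho}(\alpha,\beta)$ directly from Definition~\ref{defR}. In type $B_r$ the long roots $\alpha_1,\dots,\alpha_{r-1}$ have $d_i=2$, and $d_ia_{ij}$ is even for every edge, including $d_{r-1}a_{r-1,r}=-2$. This gives $\vec{\rho}\in2\Z$ at once. Only the few oriented edges meeting the "boundary" indices of $\alpha$ and $\beta$ contribute, as in Example~\ref{expAn}(a).

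In parallel, for each family we pick an explicit lift $(\alpha^\circ,\beta^\circ)\in S(\alpha,\beta)$ in $A_{2r-1}$, respectively $D_{r+1}$. For $A_{2r-1}$ the sign is read off from Example~\ref{expAn}(a),(c), by the parity of the junction index. For $D_{r+1}$ we compute $\vec{\rho}^\circ$ by the same boundary-edge count. Comparing the two computations family by family gives (a), namely $\vec{\rho}/2$ matching the junction parity. For (b), the differences $\vec{\rho}(\alpha,\beta)-\vec{\rho}(\beta,\alpha)$ come out in $\{\pm1,\pm2,\pm3\}$ and their sign matches $\varepsilon_0^\circ$. This is the analogue of Example~\ref{expAn}(b),(c), which we transfer through the lift.

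\textbf{Main obstacle.} The obstacle is the bookkeeping near the special node. Pairs involving short roots in $B_r$, and the long roots $2e_i$ in $C_r$, have lifts crossing the fold, which are sums of two $\tau$-conjugate pieces. Here several oriented edges contribute, and the parity of $r$ affects which $c_i$ equals $-1$ at the end of the diagram. We will handle this by splitting into $r$ even and $r$ odd, or better by writing $c_i=(-1)^{i+1}$ so that everything depends only on the parities of $i,j,k$. We will then verify the finitely many parity patterns, using invariance (F4) to choose the most convenient lift in each case.
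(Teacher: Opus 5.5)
\textbf{The two foldings are interchanged, and this breaks the proof as planned.} You realise $B_r$ from $A_{2r-1}$ and $C_r$ from $D_{r+1}$. In the setting of Theorem~\ref{thmF1}, where $\fg=\{x\in\fg^\circ\mid\tilde\tau(x)=x\}$ and (F1)--(F3) hold, it is the other way round: $B_r$ comes from $D_{r+1}$ and $C_r$ from $A_{2r-1}$. The diagram automorphism of $\mathfrak{sl}_{2r}$ has fixed points $\mathfrak{sp}_{2r}$, and that of $\mathfrak{so}_{2r+2}$ has fixed points $\mathfrak{so}_{2r+1}$.

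You can also see directly that no $\eta$ as in (F1)--(F3) maps $A_{2r-1}$ onto $B_r$ when $r\geq 3$:
\begin{itemize}
\item[(i)] The fixed middle node $\alpha_r^\circ$ is the only preimage of $\eta(\alpha_r^\circ)$. Hence every root of $\eta(\Phi^\circ)$ has coefficient $0$ or $\pm1$ at $\eta(\alpha_r^\circ)$.
\item[(ii)] On the other hand, $\eta(\alpha_{r-1}^\circ+\alpha_r^\circ+\alpha_{r+1}^\circ)=2\eta(\alpha_{r-1}^\circ)+\eta(\alpha_r^\circ)$ is a root.
\item[(iii)] In $B_r$ the only root of the form $2\beta+\beta'$ with $\beta,\beta'$ simple is $2\alpha_r+\alpha_{r-1}$. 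This forces $\eta(\alpha_r^\circ)=\alpha_{r-1}$, but $\alpha_{r-1}$ has coefficient $2$ in the highest root, contradicting (i).
\end{itemize}
Your pairing is the Langlands-dual one that arises when folding Cartan data rather than Lie algebras.

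As a consequence, the sets $S(\alpha,\beta)$ from which you want to draw lifts are not defined, and Theorem~\ref{thmF1} cannot be invoked. Your ``main obstacle'' paragraph also describes the wrong geometry. Correctly, in $B_r$ it is the short roots that have two lifts, interchanged by $\alpha_0^\circ\leftrightarrow\alpha_1^\circ$ in $D_{r+1}$. In $C_r$ the long roots $2e_j$ have a unique $\tau$-stable lift $\alpha^\circ_{j,2r-j+1}$ straddling the middle node.

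\textbf{After swapping the cases back, your strategy is essentially the paper's, and (a) needs no family-by-family check.} Use the labelling of Table~\ref{Mfoldi}. Then $\vQ(A^\circ)$ arises from $\vQ(A)$ by replacing $(2,1)$ with $(2,0),(2,1)$, and $d_ia_{ij}=-2$ for all $(i,j)\in\vQ(A)$. Since $m_0^\circ+m_1^\circ=m_1$, bilinearity gives $\vec{\rho^\circ}(\alpha^\circ,\beta^\circ)=\frac{1}{2}\vec{\rho}(\alpha,\beta)$ for \emph{every} lift. Part (a) then follows from Theorem~\ref{thmF1} and Corollary~\ref{vecA1}, even for arbitrary $\alpha,\beta\in\Phi$. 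The same identity holds for the opposite choice of signs. So the parity condition you flagged but did not prove in your reduction is not needed; it also follows from (FLM3) for $\varepsilon_0^\circ$.

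For (b) your plan matches the paper:
\begin{itemize}
\item[(i)] One of $\alpha,\beta$ has $\alpha_r$-coefficient $0$, so you may take $\alpha=\alpha_i+\cdots+\alpha_{j-1}$ and lift it to $\alpha^\circ_{ij}$.
\item[(ii)] Lift $\beta$ to $\alpha^\circ_{kl}$ or to $\alpha^\circ_{j,2r-k+1}$, according to its type.
\item[(iii)] Show that $\vec{\rho}(\alpha,\beta)-\vec{\rho}(\beta,\alpha)$ equals $1$, $2$ or $3$ times $\vec{\rho^\circ}(\alpha^\circ,\beta^\circ)-\vec{\rho^\circ}(\beta^\circ,\alpha^\circ)=\pm1$.
\end{itemize}
Example~\ref{expAn}(c) then gives the sign.
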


Using Remark~\ref{remcan1}(a) and Remark~\ref{lemRi}, one immediately sees 
that it is sufficient to prove Proposition~\ref{prop41} for one of the two 
possible choices of signs $\{c_i\mid i \in I\}$ fixing the special Chevalley 
system~$\cL$. The further strategy is analogous to that in Rylands \cite{Ry}. 
The crucial step is to work out the values $\vec{\rho}(\alpha,\beta)$ 
(in~(a)) or, at least, the differences $\vec{\rho}(\alpha,\beta)-\vec{\rho}
(\beta,\alpha)$ (in~(b)), using the explicit knowledge of all roots in 
type $B_r$ and $C_r$ as expressions of simple roots. It then 
remains to apply the appropriate ``folding'' and Theorem~\ref{thmF1};  
see Section~\ref{secBC} for details.

\begin{rem} \label{prop41a} Let $\fg$ be of type $B_r$ or $C_r$, as above.
Let $\alpha,\beta\in \Phi$ be arbitrary such that $\alpha+\beta\in\Phi$. We 
will see in the following section that the statement in 
Proposition~\ref{prop41}(a) remains valid without modification. In 
Proposition~\ref{prop41}(b), some modifications are required. For any
$\alpha,\beta \in Q=\Z\Phi$ let us set 
\[\mu(\alpha,\beta):=\left\{\begin{array}{rl} 1 & \quad \mbox{if 
$\vec{\rho}(\alpha,\beta)\geq \vec{\rho}(\beta,\alpha)$},\\ -1 & \quad 
\mbox{if $\vec{\rho}(\alpha,\beta)<\vec{\rho}(\beta,\alpha)$}.\end{array}
\right.\]
Using the formulae in Remark~\ref{allsigns}, and the fact that 
$\vec{\rho}\colon Q \times Q\rightarrow \Z$ is bilinear, one easily deduces
the following formulae: 
\[\varepsilon(\alpha,\beta)=\left\{\begin{array}{rl} 
\mu(\alpha,\beta) & \mbox{if $\mbox{sgn}(\alpha)=\mbox{sgn}(\beta)$},\\ 
(-1)^{\hgt(\alpha)} \mu(\alpha,\beta) & \mbox{if $\mbox{sgn}(\alpha)
\neq \mbox{sgn}(\beta)= \mbox{sgn}(\alpha+\beta)$},\\
(-1)^{\hgt(\beta)} \mu(\alpha,\beta) & \mbox{if $\mbox{sgn}(\beta)
\neq \mbox{sgn}(\alpha)=\mbox{sgn}(\alpha+\beta)$},\end{array}\right.\]
where $\mbox{sgn}\colon \Phi\rightarrow \{\pm 1\}$ is defined in
Remark~\ref{rem2}.
\end{rem}

\begin{rem} \label{summa} We can summarize the above results as follows.
Let $\fg$ be \textit{any} simple Lie algebra and $\cL=\{e_\alpha\mid 
\alpha\in \Phi\}$ be a special Chevalley system for~$\fg$. Then there is a 
function $\mathfrak{f}\colon\Z\times\Z \rightarrow \{\pm 1\}$ such that 
\[\varepsilon(\alpha,\beta)=\mathfrak{f}\bigl(\vec{\rho}(\alpha,\beta),
\vec{\rho}(\beta,\alpha)\bigr)\quad\mbox{whenever}\quad \alpha,\beta,
\alpha+\beta\in \Phi^+.\]
This function $\mathfrak{f}$ only depends on the type of $\fg$, that is, 
the letter $A$, $B$, $C$, $D$, $E$, $F$ or $G$, but not on the rank of~$\fg$.
For example, for type $A$, $D$, $E$ or $G$, we have $\mathfrak{f}(n,m)=
(-1)^{n}$; see Corollary~\ref{vecA1} and Example~\ref{expG2}(a). Or for 
type $C$, we have $\mathfrak{f}(n,m)=1$ if $n>m$, and 
$\mathfrak{f}(n,m)=-1$ if $n<m$; see Proposition~\ref{prop41}(b). 
\end{rem}

\section{Computing structure constants for $B_r$ and $C_r$} \label{secBC}

In this section, we provide the details of the proof of 
Proposition~\ref{prop41}. Let $\fg$ be a simple Lie algebra with a 
root system $\Phi$ of type $B_r$ or $C_r$, where $r\geq 2$. As explained
in the previous section, a special Chevalley system $\cL=\{e_\alpha\mid 
\alpha \in \Phi\}$ for $\fg$ can be obtained by a folding procedure from 
a suitable Lie algebra $\fg^\circ$ with a simply laced root system 
$\Phi^\circ$; see Theorem~\ref{thmF1}. If $\Phi$ is of type $B_r$, then
$\Phi^\circ$ will be of type $D_{r+1}$; if $\Phi$ is of type~$C_r$, then 
$\Phi^\circ$ will be of type $A_{2r-1}$. We fix labellings of the 
simple roots, and the signs $\{c_i\}$ (in terms of plus/minus signs 
attached to the vertices in the graphs) as in Table~\ref{Mfoldi}. 

\begin{table}[htbp]
\caption{Folding for type $B_r$ and $C_r$} \label{Mfoldi}
\begin{center} 
\begin{picture}(340,90) 
\put( 13,43){$C_r$}
\put( 12,32){$\scriptstyle{r \geq 2}$}
\put( 40,40){\circle*{5}}
\put( 38,45){$\scriptstyle{1}^+$}
\put( 41,40){\line(1,0){18}}
\put( 60,40){\circle*{5}}
\put( 58,45){$\scriptstyle{2}^-$}
\put( 60,40){\line(1,0){10}}
\put( 80,40){\circle*{1}}
\put( 90,40){\circle*{1}}
\put(100,40){\circle*{1}}
\put(110,40){\line(1,0){10}}
\put(120,40){\circle*{5}}
\put(108,45){$\scriptstyle{r{-}1}^\pm$}
\put(120,38){\line(1,0){20}}
\put(120,41){\line(1,0){20}}
\put(126,36.5){$<$}
\put(140,40){\circle*{5}}
\put(138,45){$\scriptstyle{r}^\mp$}

\put(181,36){$A_{2r{-}1}$}
\put(227,40){\circle*{1}}
\put(227,38){\circle*{1}}
\put(227,36){\circle*{1}}
\put(247,40){\circle*{1}}
\put(247,38){\circle*{1}}
\put(247,36){\circle*{1}}
\put(307,40){\circle*{1}}
\put(307,38){\circle*{1}}
\put(307,36){\circle*{1}}

\put(227,47){\circle*{5}}
\put(225,52){$\scriptstyle{1}^+$}
\put(227,47){\line(1,0){19}}
\put(247,47){\circle*{5}}
\put(245,52){$\scriptstyle{2}^-$}
\put(247,47){\line(1,0){10}}
\put(267,47){\circle*{1}}
\put(277,47){\circle*{1}}
\put(287,47){\circle*{1}}
\put(297,47){\line(1,0){10}}
\put(307,47){\circle*{5}}
\put(298,52){$\scriptstyle{r{-}1}^\mp$}
\put(308,47){\line(2,-1){19}}
\put(327,38){\circle*{5}}
\put(327,42){$\scriptstyle{r}^\pm$}

\put(227,29){\circle*{5}}
\put(210,17){$\scriptstyle{2r{-}1}^+$}
\put(227,29){\line(1,0){19}}
\put(247,29){\circle*{5}}
\put(238,17){$\scriptstyle{2r{-}2}^-$}
\put(247,29){\line(1,0){10}}
\put(267,29){\circle*{1}}
\put(277,29){\circle*{1}}
\put(287,29){\circle*{1}}
\put(297,29){\line(1,0){10}}
\put(307,29){\circle*{5}}
\put(298,17){$\scriptstyle{r{+}1}^\mp$}
\put(308,29){\line(2,1){19}}

\put( 13,83){$B_{r}$}
\put( 12,72){$\scriptstyle{r \geq 2}$}
\put( 40,80){\circle*{5}}
\put( 38,85){$\scriptstyle{1}^+$}
\put( 41,81){\line(1,0){18}}
\put( 41,78){\line(1,0){18}}
\put( 46,76.5){$<$}
\put( 60,80){\circle*{5}}
\put( 58,85){$\scriptstyle{2}^-$}
\put( 61,80){\line(1,0){17}}
\put( 80,80){\circle*{5}}
\put( 77,85){$\scriptstyle{3}^+$}
\put( 81,80){\line(1,0){11}}
\put(100,80){\circle*{1}}
\put(110,80){\circle*{1}}
\put(120,80){\circle*{1}}
\put(128,80){\line(1,0){10}}
\put(140,80){\circle*{5}}
\put(138,85){$\scriptstyle{r}^\pm$}

\put(180,81){$D_{r{+}1}$}
\put(227,84){\circle*{1}}
\put(227,82){\circle*{1}}
\put(227,80){\circle*{1}}
\put(227,92){\circle*{5}}
\put(213,90){$\scriptstyle{1}^+$}
\put(227,72){\circle*{5}}
\put(213,70){$\scriptstyle{0}^+$}
\put(229,91){\line(2,-1){18}}
\put(229,73){\line(2,1){18}}
\put(247,82){\circle*{5}}
\put(245,87){$\scriptstyle{2}^-$}
\put(247,82){\line(1,0){30}}
\put(267,82){\circle*{5}}
\put(265,87){$\scriptstyle{3}^+$}
\put(287,82){\circle*{1}}
\put(297,82){\circle*{1}}
\put(307,82){\circle*{1}}
\put(315,82){\line(1,0){10}}
\put(327,82){\circle*{5}}
\put(325,87){$\scriptstyle{r}^\pm$}
\put( 25, 0){\footnotesize (The dotted vertical lines are between vertices 
in the same $\tau$-orbit.)}
\end{picture}
\end{center}
\end{table}

In the discussion below, the paragraphs referring to $\Phi$ of
type $B_r$ will be labelled by \textbf{B.1}, \textbf{B.2} and so
on; similarly, the paragraphs referring to $\Phi$ of type $C_r$ will 
be labelled by \textbf{C.1}, \textbf{C.2} and so on. 

\renewcommand{\thesection}{B}

\begin{abs} \label{absB1} Assume that $\Phi$ is of type $B_r$ ($r\geq 2$),
with simple roots $I=\{\alpha_1,\ldots,\alpha_r\}$ labelled as in 
Table~\ref{Mfoldi}. We have $d_1=1$ and $d_i=2$ for $2\leq i \leq r$ 
in this case; 
furthermore,
\[\vQ(A) =\{(2,1),(2,3),(4,3),(4,5),(6,5),(6,7), \ldots\}.\]
Since $a_{12}=-2$, $a_{21}=-1$ and $a_{i,i+1}=a_{i+1,i}=-1$ for $2 \leq i 
\leq r-1$, we have $d_ia_{ij}=-2$ for all $(i,j) \in \vQ(A)$ and so 
\[ \vec{\rho}(\alpha,\beta)=-2\sum_{(i,j)\in \vQ(A)} n_im_j 
\quad \in 2\Z \qquad \mbox{for all $\alpha,\beta \in \Z\Phi$}\]
where $\alpha=\sum_{1\leq i \leq r} n_i\alpha_i$ and $\beta=\sum_{1\leq i
\leq r} m_i\alpha$ with $n_i,m_i\in\Z$. The positive roots in $\Phi$ are 
described as follows:
\begin{align*}
&\alpha_1+\alpha_2+\ldots +\alpha_i\quad \mbox{for $1\leq i \leq r$},\\ 
&\alpha_{i+1}+\alpha_{i+2}+\ldots + \alpha_j \quad\mbox{for $1 \leq 
i<j\leq r$},\\&2(\alpha_1+\ldots +\alpha_i)+\alpha_{i+1} + \ldots + 
\alpha_j  \quad \mbox{for $1\leq i<j \leq r$};
\end{align*} 
see, e.g., ``case (b)'' in \cite[Remark~2.5.5]{mynotes}. 
\end{abs}

\begin{abs} \label{absB2} Let $\Phi^\circ$ be a root system of type
$D_{r+1}$ ($r\geq 2$), with simple roots $I^\circ=\{\alpha_0^\circ,
\alpha_1^\circ,\ldots,\alpha_r^\circ\}$ labelled as in Table~\ref{Mfoldi};
let $A^\circ$ be the corresponding Cartan matrix. (We typically attach a 
superscript ${}^\circ$ to all objects related to $\Phi^\circ$.) We have 
\[\vQ(A^\circ)=\{(2,0),(2,1),(2,3),(4,3),(4,5),(6,5),(6,7),\ldots\}.\]
The positive roots in $\Phi^\circ$ are given by
\begin{align*}
&\alpha_{i+1}^\circ+\alpha_{i+2}^\circ+\ldots +\alpha_j^\circ,\\
&(\alpha_0^\circ+\alpha_1^\circ+\ldots +\alpha_i^\circ)+(\alpha_2^\circ+
\alpha_3^\circ+\ldots+ \alpha_j^\circ),
\end{align*}
where $0\leq i <j \leq r$ in both cases; see, e.g., ``case (a)'' 
in \cite[Remark~2.5.5]{mynotes}. The permutation $\tau\colon \Phi^\circ
\rightarrow \Phi^\circ$ switches $\alpha_0^\circ,\alpha_1^\circ$ and 
fixes~$\alpha_i^\circ$ for $2\leq i \leq r$. The linear map $\nu \colon 
\Phi^\circ \rightarrow \Phi$ in (F1) is defined by 
\[\nu(\alpha_0^\circ)=\nu(\alpha_1^\circ)=\alpha_1\qquad \mbox{and}\qquad
\nu(\alpha_i^\circ)=\alpha_i \quad \mbox{for $2\leq i \leq r$}.\]
The above description of the roots in $\Phi$ and $\Phi^\circ$ shows the 
following. Let $\alpha=\sum_{1\leq i \leq r} n_i\alpha_i\in \Phi$, 
where either $n_i\geq 0$ for all~$i$, or $n_i\leq 0$ for all~$i$. Then 
$\alpha=\nu(\alpha^\circ)$ where
\[ \alpha^\circ:=n_0^\circ\alpha_0^\circ+n_1^\circ\alpha_1^\circ
+\sum_{2\leq i \leq r} n_i\alpha_i^\circ\quad\in \Phi^\circ\]
and $n_0^\circ,n_1^\circ\in \Z$ are such that $n_1=n_0^\circ+n_1^\circ$; 
furthermore, $|n_1|\leq 2$ and $|n_0^\circ|,|n_1^\circ|\leq 1$. Thus,
there are exactly two possibilities for $\alpha^\circ$ if $|n_1|=1$;
otherwise, there is a unique possibility.
\end{abs}

\begin{abs} \label{absB3} We can now complete the proof of 
Proposition~\ref{prop41}(a), even for arbitrary $\alpha,\beta\in \Phi$ 
such that $\alpha+\beta\in \Phi$. We write $\alpha=\sum_{1\leq i \leq r} 
n_i\alpha_i$ and $\beta=\sum_{1\leq i \leq r} m_i\alpha_i$ with $n_i,m_i
\in \Z$. As above we have $\alpha=\nu(\alpha^\circ)$ and $\beta=
\nu(\beta^\circ)$ where 
\begin{alignat*}{2} 
\alpha^\circ&=n_0^\circ\alpha_0^\circ+n_1^\circ\alpha_1^\circ +\sum_{2\leq 
i \leq r} n_i\alpha_i^\circ\in \Phi^\circ &&\qquad (n_1=n_0^\circ+n_1^\circ),
\\ \beta^\circ&=m_0^\circ\alpha_0^\circ+m_1^\circ\alpha_1^\circ +
\sum_{2\leq i \leq r} m_i\alpha_i^\circ\in \Phi^\circ &&\qquad (m_1=
m_0^\circ+m_1^\circ);
\end{alignat*}
note that we can always choose $n_0^\circ,n_1^\circ,m_0^\circ,m_1^\circ$ 
such that $\alpha^\circ+\beta^\circ\in \Phi^\circ$, in accordance with
(F4). Now, we have 
\[ \vQ(A)=\{(2,1)\}\cup P\qquad\mbox{and}\qquad \vQ(A^\circ)=
\{(2,0),(2,1)\} \cup P\]
where $P:=\{(2,3),(4,3),(4,5),(6,5),(6,7),\ldots\}$. Hence, we obtain
\begin{align*}
\vec{\rho}(\alpha,\beta)&=-2n_2m_1-2\sum_{(i,j)\in P} n_im_j,\\
\vec{\rho^\circ}(\alpha^\circ,\beta^\circ)&=-n_2m_0^\circ-n_2m_1^\circ
-\sum_{(i,j)\in P} n_im_j.
\end{align*}
This yields that 
$\vec{\rho^\circ}(\alpha^\circ,\beta^\circ)=\textstyle{\frac{1}{2}}
\vec{\rho}(\alpha,\beta)+n_2(m_1-m_0^\circ-m_1^\circ)=
\textstyle{\frac{1}{2}} \vec{\rho}(\alpha,\beta)$.
So Theorem~\ref{thmF1} and Corollary~\ref{vecA1} show that 
$\varepsilon(\alpha,\beta)=(-1)^{\vec{\rho}(\alpha,\beta)/2}$. \qed
\end{abs}

\renewcommand{\thesection}{C}
\setcounter{thm}{0}

\begin{abs} \label{absC1} Assume that $\Phi$ is of type $C_r$ ($r\geq 2$),
with simple roots $I=\{\alpha_1,\ldots,\alpha_r\}$ labelled as in 
Table~\ref{Mfoldi}. We have $d_r=2$ and $d_i=1$ for $1\leq i \leq r-1$ in 
this case; furthermore,
\[\vQ(A) =\{(2,1),(2,3),(4,3),(4,5),(6,5),(6,7), \ldots\}.\]
Since $a_{r,r-1}=-1$, $a_{r-1,r}=-2$ and $a_{i,i+1}=a_{i+1,i}=-1$ for 
$1 \leq i \leq r-2$, we have $d_ra_{r,r-1}=d_{r-1}a_{r-1,r}=-2$ and 
$d_ia_{ij}=-1$ for $(i,j)\in \vQ(A)$ with $i\leq r-2$. This yields 
the formula
\[ \vec{\rho}(\alpha,\beta)=\left\{\begin{array}{cl} -2n_rm_{r-1}-
\sum_{(i,j)\in \vQ(A),i\leq r-2} n_im_j & \quad \mbox{if $r$ is even},
\\ -2n_{r-1}m_r-\sum_{(i,j)\in \vQ(A),i\leq r-2} n_im_j & \quad 
\mbox{if $r$ is odd},\end{array}\right.\]
where $\alpha=\sum_{1\leq i \leq r} n_i\alpha_i$ and $\beta=\sum_{1\leq i
\leq r} m_i\alpha$ with $n_i,m_i\in\Z$. (Note that the sign $c_r$ equals 
$-1$ if $r$ is even, and $1$ if $r$ is odd.) The positive roots in $\Phi$ 
are described as follows (see, e.g., ``case (c)'' in 
\cite[Remark~2.5.5]{mynotes}). The long roots are given by 
\[ \gamma_j:=2(\alpha_j+\alpha_{j+1}+\ldots +\alpha_{r-1})+\alpha_r
\qquad \mbox{for $1\leq j \leq r$}.\]
(For example, $\gamma_r=\alpha_r$.) There are two types of short roots:
\begin{alignat*}{2}
\alpha_{ij}&:=\alpha_{i}+\alpha_{i+1}+\ldots + \alpha_{j-1} &&\qquad 
\mbox{for $1\leq i<j\leq r$},\tag{I}\\\gamma_{ij}&:=\alpha_i+\alpha_{i+1}
+\ldots+\alpha_{j-1}+\gamma_j &&\qquad\mbox{for $1\leq i<j\leq r$}.\tag{II}
\end{alignat*}
We may also set $\alpha_{jj}:=0$ and $\gamma_{jj} :=\gamma_j$ for 
$1\leq j \leq r$ in order to have a uniform notation. Then 
$\gamma_{ij}=\alpha_{ij}+\gamma_j$ for $1\leq i\leq j \leq r$.  Hence, 
\[ \Phi^+=\{\alpha_{ij}\mid 1\leq i<j\leq r \}\cup \{\gamma_{ij} \mid
1\leq i\leq j \leq r\}.\]
\end{abs}

\begin{abs} \label{absC3} Let $\alpha,\beta\in \Phi^+$ be such that 
$\alpha+\beta\in \Phi$. The above description shows that $\alpha_r$ occurs
with multiplicity $0$ or $1$ in every positive root. So at least one of 
$\alpha,\beta$ must be of type (I). Let us assume that $\alpha=\alpha_{ij}$
where $1\leq i <j \leq r$. Then we have the following possibilities 
for~$\beta$.
\begin{itemize}
\item[(1)] $\beta=\alpha_{kl}$ where $1\leq k<l\leq r$ and either $k=j$ or
$l=i$; this is analogous to the situation in Example~\ref{expAn}. So
$\vec{\rho}(\alpha,\beta)$ and $\vec{\rho}(\beta,\alpha)$ are determined
by Example~\ref{expAn}(a). This yields:
\[ \vec{\rho}(\alpha,\beta)-\vec{\rho}(\beta,\alpha)=\left\{
\begin{array}{rl} (-1)^j &\quad \mbox{if $k=j$},\\ -(-1)^l & 
\quad \mbox{if $l=i$}. \end{array}\right.\]
\item[(2)] $\beta=\gamma_{jk}=\alpha_{jk}+\gamma_k$ where $j\leq k\leq r$; 
then 
\[\alpha+\beta = (\alpha_{ij}+\alpha_{jk})+\gamma_k=\alpha_{ik}+\gamma_k=
\gamma_{ik}.\]
As far as the evaluation of $\vec{\rho}(\alpha,\beta)$ and $\vec{\rho}
(\beta, \alpha)$ is concerned, the situation is almost like that in 
Example~\ref{expAn}. (There is still at most one non-zero term in the 
sum defining $\vec{\rho}(\alpha,\beta)$ or $\vec{\rho}(\beta,\alpha)$, 
but this term may be~$-2$.) One finds that 
\[ \vec{\rho}(\alpha,\beta)-\vec{\rho}(\beta,\alpha)=
\left\{\begin{array}{rl} (-1)^j &\quad \mbox{if $j<k$},\\ (-2)^j & 
\quad \mbox{if $j=k$}. \end{array}\right.\]
\item[(3)] $\beta=\gamma_{kj}=\alpha_{kj}+\gamma_j$ where $1\leq k\leq j$.
If $i \leq k$, then 
\begin{align*} 
\alpha+\beta &=(\alpha_i+\ldots +\alpha_{j-1})+(\alpha_k+\ldots + 
\alpha_{j-1})+\gamma_j\\&=(\alpha_i+\ldots +\alpha_{k-1})+
2(\alpha_k+\ldots +\alpha_{j-1})+\gamma_j\\&=(\alpha_i+\ldots+
\alpha_{k-1})+\gamma_k=\alpha_{ik}+\gamma_k.
\end{align*}
Similarly, if $i>k$, then $\alpha+\beta=\alpha_{ki}+\gamma_i$. 
Now it is somewhat more complicated to describe the exact values of 
$\vec{\rho}(\alpha,\beta)$ and $\vec{\rho}(\beta,\alpha)$. (There may
be several non-zero terms contributing to the sum defining 
$\vec{\rho}(\alpha,\beta)$.) Nevertheless, a straightforward but 
slightly lengthy verification (which we omit) shows that 
\[ \qquad\vec{\rho}(\alpha,\beta)-\vec{\rho}(\beta,\alpha)\;= \; 
(-1)^j \quad \mbox{or} \quad 2(-1)^j \quad\mbox{or}\quad 3(-1)^j.\]
\end{itemize} 
\end{abs}

\begin{abs} \label{absC5} Here are a few examples for type $C_4$:
\[ \begin{array}{ccrrr} \hline \alpha=\alpha_{ij} & \beta & 
\vec{\rho}(\alpha,\beta) & \vec{\rho}(\beta,\alpha) & N_{\alpha,\beta}
\\ \hline \alpha_1 & \alpha_2 & 0 \quad & -1\quad & 1\;\\ 
\alpha_2 & \alpha_2{+}2\alpha_3{+}\alpha_4  & -2\quad & 0\quad & -2 \;\\
\alpha_2 & \alpha_1{+}\alpha_2{+}2\alpha_3{+}\alpha_4 & -3 \quad& 0\quad 
& -1\;\\ \alpha_1{+}\alpha_2 & \alpha_2{+}2\alpha_3{+}\alpha_4 & -2 
\quad & -1 \quad & -1\; \\ \alpha_1{+}\alpha_2{+}\alpha_3 & \alpha_2{+}
\alpha_3{+}\alpha_4 & -1\quad & -4 \quad & 1 \;\\ \alpha_1{+}\alpha_2
{+}\alpha_3 & \alpha_1{+}\alpha_2{+} \alpha_3{+} \alpha_4 & -2 \quad & -4 
\quad & 2 \;\\ \hline\end{array}\]
In particular, these show that the sign $\varepsilon(\alpha,\beta)$ of 
$N_{\alpha,\beta}$ is not determined by $\vec{\rho}(\alpha,\beta)$ or 
$\vec{\rho}(\beta,\alpha)$ alone. They also show that the difference 
$\vec{\rho}(\alpha,\beta)- \vec{\rho}(\beta,\alpha)$ can take any of 
the values $\pm 1, \pm 2,\pm 3$.
\end{abs}

\begin{abs} \label{absC2} Let $\Phi^\circ$ be a root system of type
$A_{2r-1}$ ($r\geq 2$), with simple roots $I^\circ=\{\alpha_1^\circ,
\ldots,\alpha_{2r-1}^\circ\}$ labelled as in Table~\ref{Mfoldi}; let 
$A^\circ$ be the corresponding Cartan matrix. (As before, we typically 
attach a superscript ${}^\circ$ to all objects related to $\Phi^\circ$.) 
The set $\vQ(A^\circ)$ and the positive roots in $\Phi^\circ$ are
described in Example~\ref{expAn}; the latter are just given by
\[ \alpha_{ij}^\circ:=\alpha_{i}^\circ+\alpha_{i+1}^\circ+\ldots +
\alpha_{j-1}^\circ \qquad \mbox{for $1\leq i <j \leq 2r$}.\] 
The permutation $\tau\colon \Phi^\circ\rightarrow \Phi^\circ$ fixes
$\alpha_r^\circ$ and switches $\alpha_i^\circ,\alpha_{2r-i}^\circ$ for 
$1\leq i \leq r-1$. The linear map $\nu \colon \Phi^\circ\rightarrow \Phi$ 
in (F1) is defined by 
\[ \nu(\alpha_r^\circ)=\alpha_r \quad \mbox{and} \quad \nu(\alpha_i^\circ)
=\nu(\alpha_{2r-i}^\circ)=\alpha_i\quad \mbox{for $1\leq i \leq r-1$}.\]
Thus, the description of the roots in $\Phi^\circ$ is simpler than in 
the previous case, but $\nu\colon \Phi^\circ \rightarrow \Phi$ is more 
complicated. Note the following formulae which will be used frequently
below. We have 
\[ \alpha_{ij}=\nu(\alpha_{ij}^\circ)=\nu(\alpha_{2r-j+1,2r-i+1}^\circ)
\qquad \mbox{for $1\leq i<j\leq r$}.\]
Furthermore, for $1\leq j \leq r$, we have $\nu(\gamma_j^\circ)=\gamma_j$ 
where we set 
\begin{align*}
\gamma_j^\circ &:=(\underbrace{\alpha_j^\circ+\alpha_{j+1}^\circ+
\ldots + \alpha_{r-1}^\circ}_{\text{$r-j$ terms}}) +\alpha_r^\circ+
(\underbrace{\alpha_{r+1}^\circ+ \alpha_{r+2}^\circ+\ldots +
\alpha_{2r-j}^\circ}_{\text{$r-j$ terms}})\\ &=\alpha_{j,2r-j+1}^\circ
\in \Phi^\circ.
\end{align*}
\end{abs}

\begin{abs} \label{absC4} We can now complete the proof of 
Proposition~\ref{prop41}(b). Let $\alpha,\beta\in \Phi^+$ be such that 
$\alpha+\beta\in \Phi$. The formulation of the desired formula in 
Proposition~\ref{prop41}(b) shows that it is enough to prove that formula 
for either $\varepsilon(\alpha,\beta)$ or $\varepsilon(\beta,\alpha)$. In
\textbf{\ref{absC3}} we have seen that one of $\alpha,\beta$ must be equal 
to~$\alpha_{ij}$ where $1\leq i<j\leq r$. So we can assume that 
$\alpha=\alpha_{ij}$; then~$\beta$ will be given as in one of the above 
three cases (1), (2), (3). We have $\nu(\alpha^\circ)=\alpha$ for 
$\alpha^\circ:=\alpha_{ij}^\circ\in \Phi^\circ$. For each possibility
of $\beta$ we need to find $\beta^\circ \in \Phi^\circ$ such that
$\nu(\beta^\circ)=\beta$ and $\alpha^\circ+\beta^\circ\in\Phi^\circ$.

First let $\beta=\alpha_{kl}$ as in case~(1), where $1\leq k<l\leq r$ and 
either $k=j$ or $l=i$. Then we can just take $\beta^\circ=\alpha_{kl}^\circ
\in \Phi^\circ$. Consequently, we have $\vec{\rho}(\alpha,\beta)=
\vec{\rho^\circ}(\alpha^\circ,\beta^\circ)$ and $\vec{\rho}(\beta,
\alpha)=\vec{\rho^\circ}(\beta^\circ,\alpha^\circ)$. In particular:
\[ \vec{\rho}(\alpha,\beta)-\vec{\rho}(\alpha,\beta)=\vec{\rho^\circ}
(\alpha^\circ,\beta^\circ)-\vec{\rho^\circ}(\beta^\circ,\alpha^\circ)\]
where the right hand side equals $\pm 1$ by Example~\ref{expAn}(b).
Using Theorem~\ref{thmF1} and the formula in Example~\ref{expAn}(c), it 
follows that the desired formula in Proposition~\ref{prop41}(b) holds.

Now let $\beta=\gamma_{jk}$ as in case~(2), or $\beta=\gamma_{kj}$ as 
in case~(3). In both cases, we set $\beta^\circ:=\alpha_{j,2r-k+1}^\circ
\in\Phi^\circ$. In case (2), we have 
\[\beta^\circ=\alpha_{j}^\circ+\alpha_{j+1}^\circ+\ldots+
\alpha_{k-1}^\circ+\gamma_k^\circ\qquad\mbox{where}\qquad j\leq k\leq r.\]
Hence, we see that $\nu(\beta^\circ)=\alpha_{jk}+\gamma_k=\gamma_{jk}=
\beta$ and 
\[ \alpha^\circ+\beta^\circ=(\alpha_{ij}^\circ+\alpha_{jk}^\circ)+
\gamma_k^\circ=\alpha_{ik}^\circ+\gamma_k^\circ=\alpha_{i,2r-k+1}^\circ
\in\Phi^\circ.\]
In case (3) we have 
\[ \beta^\circ=\gamma_j^\circ+\alpha_{2r-j+1}^\circ +\ldots +
\alpha_{2r-k}^\circ\qquad \mbox{where} \qquad 1\leq k\leq j.\]
Since $\nu(\alpha_{2r-l}^\circ)=\nu(\alpha_l^\circ)=\alpha_l$ for 
$1\leq l \leq r$, we have again $\nu(\beta^\circ)=\gamma_j+\alpha_{kj}=
\beta$ and $\alpha^\circ+\beta^\circ=\alpha_{i,2r-k+1}^\circ\in\Phi^\circ$.

Now, by Example~\ref{expAn}(a) we have
\begin{align*}
\vec{\rho^\circ}(\alpha^\circ,\beta^\circ)&= \vec{\rho^\circ}
(\alpha_{ij}^\circ,\alpha_{j,2r-k+1}^\circ)=\left\{\begin{array}{rl}
0 & \quad \mbox{if $j$ is even},\\ -1 & \quad \mbox{if $j$ is odd},
\end{array}\right.\\ \vec{\rho^\circ}(\beta^\circ,\alpha^\circ)&= 
\vec{\rho^\circ}(\alpha_{j,2r-k+1}^\circ,\alpha_{ij}^\circ)=\left\{
\begin{array}{rl} 0 & \quad \mbox{if $j$ is odd}, \\ -1 & \quad
\mbox{if $j$ is even}.  \end{array}\right.
\end{align*}
This yields the formula $\vec{\rho^\circ}(\alpha^\circ,\beta^\circ)-
\vec{\rho^\circ}(\beta^\circ,\alpha^\circ)=(-1)^j$. On the other hand,
in \textbf{\ref{absC3}}, we have seen that 
\[ \vec{\rho}(\alpha,\beta)-\vec{\rho}(\beta,\alpha)=a(-1)^j \qquad
\mbox{where} \qquad  a\in \{1,2,3\}.\]
Hence, we conclude that 
\[\vec{\rho}(\alpha,\beta)-\vec{\rho}(\beta,\alpha)=a\bigl(
\vec{\rho^\circ}(\alpha^\circ,\beta^\circ)-\vec{\rho^\circ}
(\beta^\circ,\alpha^\circ)\bigr).\] 
Using Theorem~\ref{thmF1} and the formula in Example~\ref{expAn}(c), it 
follows again that the desired formula in Proposition~\ref{prop41}(b)
holds. \qed
\end{abs}

\medskip
{\bf Acknowledgements.} I wish to thank Bill Casselman for pointing 
out to me his essay \cite{Cass1}, from which I learned about Rylands' 
unpublished notes \cite{Ry}. This work is a contribution to the SFB-TRR 195
``Symbolic Tools in Mathematics and their Application'' of the German 
Research Foundation (DFG); Project-ID 286237555.


\end{document}